\theoremstyle{plain}
\newtheorem{thm}{Theorem}
\newtheorem{lem}[thm]{Lemma}
\newtheorem{cor}[thm]{Corollary}
\newtheorem{prop}[thm]{Proposition}
\newtheorem{obs}[thm]{Observation}
\theoremstyle{definition}
\newtheorem{conj}[thm]{Conjecture}
\theoremstyle{remark}
\def\Z{{\mathbb{Z}}}
\begin{document}
%\linenumbers

\title{The packing number of the double vertex graph of the path graph}

\author{J. M. G\'omez Soto \and J. Lea\~nos \and L. M. R\'ios-Castro \and L. M. Rivera}
\date{}

\maketitle{}

\begin{abstract}
Neil Sloane showed that the problem of determining the maximum size of a binary code of constant 
weight $2$ that can correct a single adjacent transposition is equivalent to finding the packing number of a certain graph.
 In this paper we solve this open problem by finding the packing number of the double vertex graph ($2$-token graph) of a path graph. 
This double vertex graph is isomorphic to Sloane's graph. 
Our solution implies a conjecture of Rob Pratt about the ordinary generating function of sequence A085680. 
\end{abstract}

{\it Keybwords:}  Error-correcting-codes, Binary codes; Token graphs; Packing number. \\
{\it AMS Subject Classification Numbers:} 94B05, 94B65, 05C70, 05C76.

\section{Introduction}

Let $n$ be a positive integer and let $w\in \{0,1,\ldots ,n \}$.  A {\em binary code of length $n$ and weight $w$} is a subset $S$ of $\mathbb{F}_2^n$ such that every element in $S$ has exactly $w$ 1's and $n-w$ 0's. The elements of $S$ are called {\em codewords}. For a binary vector $u \in \mathbb{F}_2^n$, let $B_e(u)$ denote the set of all binary vectors 
in $\mathbb{F}_2^n$ that can be obtained from $u$ as a consequence of certain error $e$. For example, $e$ can be the deletion or the transposition of bits. In fact, in 
J. A. Gallian  \cite{galli} we can see  that for the case of digit codes and certain channels, experimental results show that these two are precisely the more common errors. A subset $C$ of $\mathbb{F}_2^n$ is said to be an {\em $e$-deletion-correcting code} if $B_e(u)\cap B_e(v)=\emptyset$, for all  $u, v \in C$ with $u \neq v$. A classical problem in coding theory is to find the largest $e$-deletion-correcting code. In the survey of Sloane~\cite{sloane1} we can see results about single-deletion-correcting-codes and in Butenko et al. \cite{buten}, and in the web page \cite{sloane2} some results about transposition error correcting codes. 

In this paper we consider the case when error $e$ consists of a single adjacent transposition (adjacent bits may be swapped). The sequence A057608 in OEIS~\cite{oeis} corresponds  precisely to the maximal size of a binary code of length $n$ that corrects one single adjacent transposition, and the range of values of $n$ for which A057608($n$) is known is from $0$ to $11$ (\cites{buten,oeis}). More information about this sequence and its equivalent formulation in the setting of graphs can be found in the web page~\cite{sloane2}. When we restrict our attention to codes of length $n$ and constant weight $k$, then the corresponding sequence in OEIS~\cite{oeis} is A085684, and  corresponds to the size $T(n,k)$ of the largest code of length $n$ and constant weight $k$ that can correct a single adjacent transposition. The case in which $k=2$ has been studied at least since 2003 and corresponds to the sequence A085680 in OEIS~\cite{oeis}.  The range of values of $n$ for which A085680($n$) is known is from $2$ to $50$.   

The problem of determining A085680($n$) was formulated in the setting of graph theory by Sloane~\cites{sloanep} as follows: let $\Gamma_n$ be the graph whose vertices are all
 binary vectors of length $n$ and constant weight $2$, and two vertices are adjacent if and only if one can be obtained from the other by transposing a pair of adjacent bits. 
 From the definition of $\Gamma_n$ and the requirement that $B_e(u)\cap B_e(v)=\emptyset$, for any distinct vertices $u$ and $v$ of $\Gamma_n$, it follows that any binary code $C$ with constant weight $2$ can correct a single adjacent transposition if and only if $C$ is a packing set of $\Gamma_n$. Therefore, the maximum cardinality of such a code $C$  is equal to the packing number $\rho(\Gamma_n)$ of $\Gamma_n$. In other words, $A085680(n)=\rho(\Gamma_n)$. 
 
 Let $G$ be a graph. We recall that a set $S\subseteq V(G)$ is a \emph{packing set} of $G$ if every pair of distinct vertices $u, v\in S$ satisfy $d_G(u,v)\geq 3$, where
$d_G(u,v)$ denotes the distance between $u$ and $v$ in $G$. The {\em packing number} $\rho(G)$ of $G$ is the maximum cardinality of a packing set of $G$. 
Note that the notion of  packing set can be extended to infinite graphs in a natural way.  

 Surprisingly, the graph $\Gamma_n$ is isomorphic to the $2$-token graph $F_2(P_n)$ of the path graph $P_{n}$.  Since the structure of $\Gamma_n$ can be more easily explained in the context of the $k$-token graphs, let us give an overview  on the $k$-token graphs in the next subsection.

%%%%%%%%%%%%%%%%%%%%%%%%%%%%%%%%%%%%%%%%%%%%%%%
%%%%%%%%%%%%%%%%%%%%%%%%%%%%%%%%%%%%%%%%%%%%%%%
%%%%%%%%%%%%%%%%%%%%%%%%%%%%%%%%%%%%%%%%%%%%%%%
\subsection{Token graphs}
Let $G$ be a graph of order $n$ and let $k$ be an integer such that $1 \leq k \leq n-1$. The $k$-token graph $F_k(G)$ of  $G$ is defined as the graph with vertex set all $k$-subsets of $V(G)$, where two vertices are adjacent in $F_k(G)$ whenever their symmetric difference is an edge of 
$G$. Note that $F_k(G)$ is isomorphic to $F_{n-k}(G)$. In Figure \ref{fig:fichasdeP6} 
we show the $2$-token graph of $P_5$ and the graph $\Gamma_5$.

\begin{figure}[ht]
\begin{center}
\includegraphics[width=.75\textwidth]{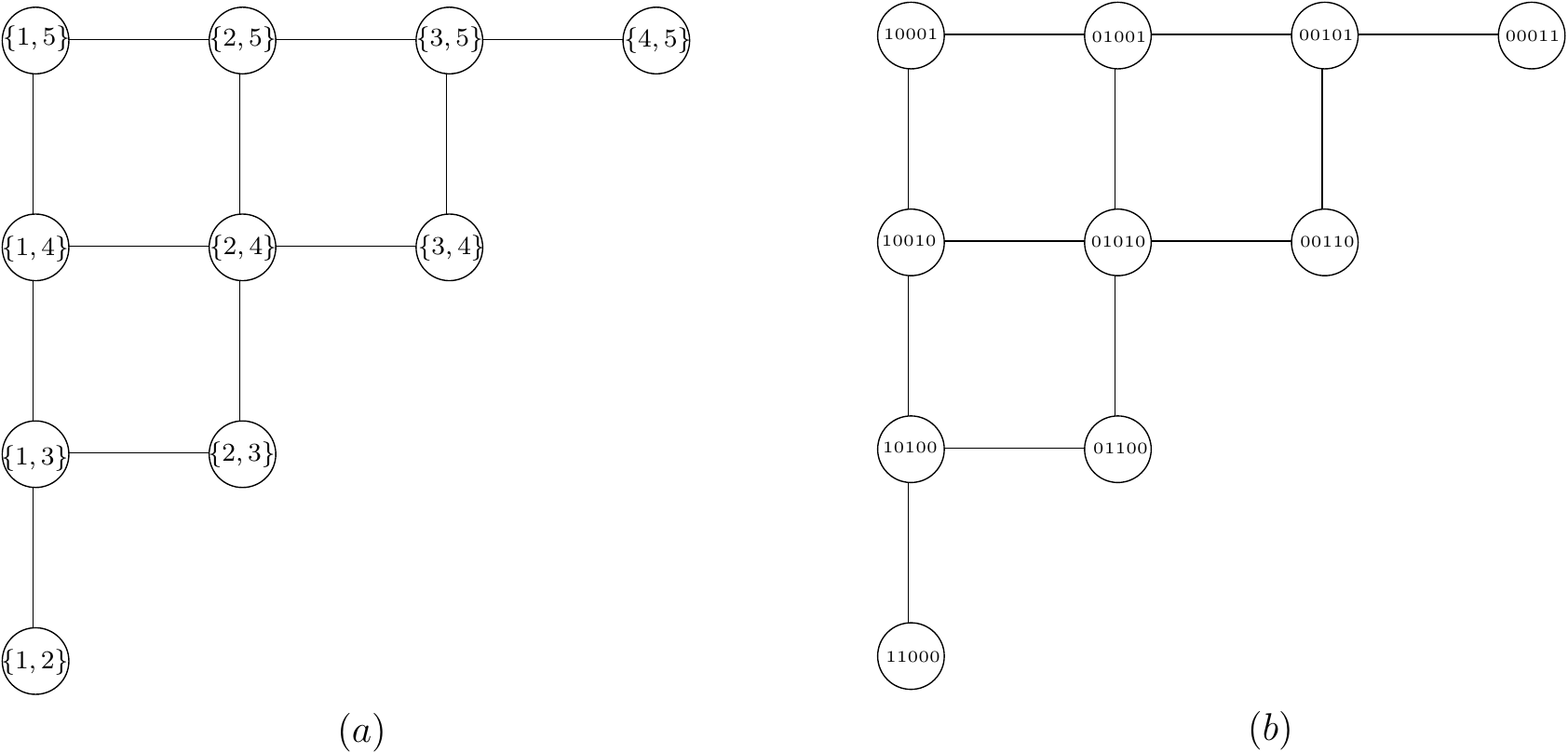}
\caption{\small (a) The $2$-token graph of $P_5$. (b) Graph $\Gamma_5$.}
\label{fig:fichasdeP6}
\end{center}
\end{figure}
 
 The $k$-token graphs were defined by Zhu, Liu, Dick and Alavi in \cite{zhu} and are a generalization of the double vertex graphs ($k=2$) defined by Alavi, Behzad and Simpson in \cite{alavi1}. Several combinatorial properties for double vertex graphs were studied by different authors (see, eg., \cites{alavi1,alavi2,alavi3}).

Later, T. Rudolph \cites{rudolph, aude}  redefined the token graphs, with the name of symmetric powers of graphs, in connection with quantum mechanics and the graph isomorphism problem. Rudolph presented examples of cospectral non-isomorphic graphs $G$ and $H$, such that the corresponding $2$-token graphs are non-cospectral. Then, as was noted by Rudolph, the eigenvalues of the $k$-token graphs seems to be a more powerful invariant than those of the original matrix $G$. However Barghi and Ponomarenko \cite{barghi} and,  independently, Alzaga et al. \cite{alzaga}, proved that for a given positive integer $k$ there exist infinitely many pairs of non-isomorphic graphs with cospectral $k$-token graphs. In 2012 Fabila-Monroy et al. \cite{FFHH} reintroduced the concept of $k$-token graph as another type reconfiguration problem \cites{caline,demaine,vanden, yama} and began the systematic study of the combinatorial parameters of $F_k(G)$. This approach has been followed in \cites{token2,token3,leatrujillo, rive-tru}, where problems related to connectivity, diameter, clique number, chromatic number, independence number, Hamiltonian paths, matching number, planarity, regularity, etc., of token graphs are studied.

Note that token graphs are also a generalization of Johnson graphs: if 
$G$ is the complete graph of order $n$, then $F_k(G)\simeq J(n,k)$. The Johnson graph was introduced by Johnson~\cite{Jo} in connection with codes, and have been widely studied from several points of view, see for example \cites{etzion,terwilliger}.

We end this subsection by noting that $\Gamma_n\simeq F_2(P_n)$.  Suppose that $V(P_n)=\{1,\ldots ,n\}$ and that $E(P_n)=\{\{i, i+1\} \colon~ 1 \leq i \leq n-1\}$. 
Consider $f\colon V(F_2(P_n))\to V(\Gamma_n)$ defined as follows: for $A \in V(F_2(P_n))$, let $f(A):=(a_1,\dots ,a_n)$, where $a_i=1$ if $i \in A$ and $a_i=0$, otherwise.
It is easy to check that such an $f$ is an isomorphism.  Hence A085680($n$)=$\rho(F_2(P_n))$.

\subsection{Main result}

The exact value for A085680($n$) before march of 2017 was known only for $n\in \{2,\ldots ,25\}$. Later, Rob  Pratt reported in \cite{oeis} the exact values of A085680($n$) for 
$n\in \{26,\ldots ,50\}$, and posed the following conjecture about the ordinary generating function for this sequence. 

\begin{conj}[Rob Pratt]\label{conjecture} 
$\sum_{n\geq 0} A085680(n+2)x^n=\frac{1-x+x^2-x^{10}+x^{11}}{(1-x)^2(1-x^5)}$.
\end{conj}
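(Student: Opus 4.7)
The plan is to first reduce the problem to a planar $L_1$-packing question on a triangular grid, and then prove the recurrence $\rho_{n+5}=\rho_n+n+2$ for $n\geq 7$, where $\rho_n:=\rho(F_2(P_n))$, together with the ten base values $(\rho_2,\rho_3,\ldots,\rho_{11})=(1,1,2,3,4,6,7,9,11,13)$. Multiplying Pratt's rational function by $(1-x)^2(1-x^5)$ and matching coefficients shows that this recurrence and base data are equivalent to the conjectured generating function.

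The first step is to identify a vertex $\{i,j\}$ of $F_2(P_n)$ with the lattice point $(i,j)$ in the triangular grid $T_n=\{(i,j)\in\mathbb Z^2:1\leq i<j\leq n\}$ and establish the distance formula
\[
d_{F_2(P_n)}\bigl((i_1,j_1),(i_2,j_2)\bigr)=|i_1-i_2|+|j_1-j_2|.
\]
The upper bound comes from an explicit move schedule that slides each token monotonically to its target in a collision-free order. The lower bound uses that two indistinguishable tokens on a path cannot swap their relative order, so one token contributes at least $|i_1-i_2|$ moves and the other at least $|j_1-j_2|$. Hence a packing of $F_2(P_n)$ is precisely a subset of $T_n$ whose pairwise $L_1$-distance is at least $3$.

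The ten base values are small enough to verify by direct enumeration, and are already tabulated in OEIS. For the lower bound $\rho_{n+5}\geq\rho_n+n+2$ in the recurrence, I would start from an optimal packing $S\subseteq T_n$ and adjoin $n+2$ explicit points lying in the shell $R=T_{n+5}\setminus T_n=\{(i,j)\in T_{n+5}:j\geq n+1\}$. The new points are drawn from a single residue class $\{(i,j):i+2j\equiv c\pmod 5\}$, which is the classical planar plus-sign packing in which any two members automatically satisfy $L_1$-distance $\geq 3$. Row-by-row enumeration gives exactly $n+2$ such points in $R$ for each $c$, and the residue is chosen so that these new points are also at $L_1$-distance $\geq 3$ from $S$; here the specific form of the optimal $S$ near the boundary $j=n$ is used (replacing $S$ by a translate of a residue class on the top rows if necessary).

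The matching upper bound $\rho_{n+5}\leq\rho_n+n+2$ is the main obstacle. Given any packing $S'\subseteq T_{n+5}$, one needs a packing $S\subseteq T_n$ of size at least $|S'|-(n+2)$. A naive restriction $S'\cap T_n$ can fail, since points of $S'$ just above $j=n$ forbid lattice points inside $T_n$. The core technical step is an exact analysis of the diagonal strip $\Sigma=\{(i,j)\in T_{n+5}:j\geq n-1\}$: one must bound, by exactly $n+2$ rather than asymptotically, the number of packing points that $S'$ carries in $\Sigma$ beyond what $S'\cap T_{n-2}$ already accommodates. I expect this to require a charging argument that pairs each shell point with a uniquely blocked slot inside $T_n$, organized by the residue of $i\bmod 5$, together with a structural case analysis of the seven-row strip to handle boundary configurations. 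Matching the construction exactly, so that the recurrence holds with equality, is the chief technical difficulty; once it is in hand, Pratt's generating function follows by an elementary calculation.
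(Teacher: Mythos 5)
Your reduction to the triangular grid with the $L_1$ metric, your base values, your recurrence $\rho_{n+5}=\rho_n+n+2$, and your lower-bound construction via the residue classes $i+2j\equiv c\pmod 5$ all coincide with what the paper actually does (the paper works with $T(n)\simeq F_2(P_{n+1})$, the coloring $f(i,j)=(i+2j)\bmod 5$, and the recursion $a(n)=a(n-5)+n-2$, which is your recurrence after reindexing). The problem is the upper bound, which you yourself flag as ``the chief technical difficulty'' and then do not prove. This is a genuine gap, and it is precisely where all the work in the paper lives. The quantitative situation is worse than your sketch suggests: the maximum packing of the five-row shell $R=T_{n+5}\setminus T_n$ \emph{alone} is $n+3$, not $n+2$ (this is the paper's Lemma~\ref{maximofranja10}(i) after reindexing), so the naive additive bound gives $\rho_{n+5}\leq\rho_n+n+3$, overshooting the target by exactly one. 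Your proposed charging argument must therefore rule out the single extremal scenario in which the shell carries $n+3$ points \emph{and} the interior simultaneously carries $\rho_n$ points, and no local pairing of shell points with blocked interior slots obviously accomplishes this.

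The paper's resolution indicates why a five-row analysis of the kind you describe is unlikely to close on its own: the authors are forced to work with a \emph{ten}-row strip (Lemma~\ref{maximofranja10}(ii), $\rho(T^{10}(n))=2n-8$), to classify by computer all maximum packings of $T(10)$ and of certain extremal ten-row configurations (Figures~\ref{fig:unicosT10} and~\ref{fig:muestra}, the latter listing $26$ cases), and to run an explicit exchange argument that converts each hypothetical extremal packing into a packing of $T(n-5)$ of size $\rho(T(n-5))+1$, contradicting the induction hypothesis. In short, your proposal is a correct and faithful outline of the paper's strategy, but the step you defer is not a routine boundary analysis: it requires either the paper's combination of strip lemmas, Fisher's grid-packing formula, finite computer enumeration, and swap constructions, or some genuinely new idea you have not supplied.
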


Consider the following sequence:  

\begin{equation*}\label{a(n)}
a(n) := \left\{ \begin{array}{ll}
             \frac{1}{10}\left(n^2+n+20\right)  & \text{ if } n \equiv 0 \mbox{ (mod 5)  or } n\equiv 4 \mbox{ (mod 5)},\\ 
             \frac{1}{10}\left(n^2+n+18\right)    & \text{ if } n \equiv1 \mbox{ (mod 5)  or } n\equiv 3 \mbox{ (mod 5)},\\
             \frac{1}{10}\left(n^2+n+14\right)  & \text{ if }  n \equiv 2  \mbox{ (mod 5)}.\\
            \end{array}
   \right.
\end{equation*}

Unless otherwise stated, for the rest of the paper, $a(n)$ is as above. The following remark follows immediately from the definition of $a(n)$. 

\begin{obs}\label{formula:rec}
For any integer $n\geq 6$ 
\[
a(n)=a(n-5)+n-2,
\]

where the first five values are $a(1)=2, a(2)=2, a(3)=3, a(4)=4$ and $a(5)=5$. 
\end{obs}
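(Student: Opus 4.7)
The plan is straightforward: since the claim is a purely arithmetic identity, I would verify it by direct computation, using only the piecewise definition of $a(n)$.

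First I would observe the key structural fact that $n$ and $n-5$ have the same residue modulo $5$, so $a(n)$ and $a(n-5)$ are always given by the \emph{same} branch of the piecewise formula. This means that in each of the three cases the additive constant (either $20$, $18$, or $14$) appears in both $a(n)$ and $a(n-5)$ and cancels in the difference $a(n) - a(n-5)$.

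Consequently, I only need to compute the common difference. Regardless of the case, one expands
\[
a(n) - a(n-5) = \frac{(n^2 + n + c) - ((n-5)^2 + (n-5) + c)}{10}
\]
for the appropriate constant $c \in \{14, 18, 20\}$, simplify the numerator to $10n - 20$, and divide by $10$ to obtain $n-2$. This single calculation simultaneously handles all five residue classes modulo $5$, which establishes the recurrence for every $n \geq 6$.

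Finally, I would verify the base cases by plugging $n = 1, 2, 3, 4, 5$ into the appropriate branch: $a(1) = (1+1+18)/10 = 2$, $a(2) = (4+2+14)/10 = 2$, $a(3) = (9+3+18)/10 = 3$, $a(4) = (16+4+20)/10 = 4$, and $a(5) = (25+5+20)/10 = 5$. There is no real obstacle here; the only mild subtlety is remembering that the residues $\{0,4\}$ and $\{1,3\}$ are grouped together, but since both members of each pair use the same constant $c$, the case analysis collapses to a single algebraic identity.
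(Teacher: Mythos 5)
Your computation is correct and is exactly the verification the paper has in mind: the paper gives no explicit proof, stating only that the observation ``follows immediately from the definition of $a(n)$,'' and your argument (same residue class for $n$ and $n-5$, hence the same constant $c$, so the difference telescopes to $(10n-20)/10 = n-2$) together with the base-case checks is precisely that immediate verification.
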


The aim of this paper is to show Conjecture \ref{conjecture} by giving an explicit formula for A085680($n+1$), namely $a(n)$.  
In Table~\ref{tablaan} we shown the first eighteen values of $A085680(n+1)$ and $a(n)$. 

\begin{table}[htp]
\caption{First eighteen values of A085680($n+1$) and $a(n)$}
\begin{center}
{\footnotesize
\begin{tabular}{|c|c|c|c|c|c|c|c|c|c|c|c|c|c|c|c|c|c|c|c|c|c|}
\hline
$n$ &1 &2&3&4&5&{\bf 6}&{\bf 7}&{\bf 8}&{\bf 9}&{\bf 10}&{\bf 11}&{\bf 12}&{\bf 13}&{\bf 14}&{\bf 15}&{\bf 16}&{\bf 17}&{\bf 18}\\
\hline
A085680($n+1$) & 1& 1& 2& 3& 4& 6& 7& 9& 11& 13& 15& 17& 20& 23& 26& 29& 32& 36\\
\hline
\footnotesize $a(n)$ &2& 2& 3& 4& 5& 6& 7& 9& 11& 13& 15& 17& 20& 23& 26& 29& 32& 36\\
\hline
\end{tabular}
}
\end{center}
\label{tablaan}
\end{table}%

Our main result is the following.

\begin{thm}\label{thm:main}
Let $n\geq 6$ be an integer. Then A085680($n+1$)$= a(n)$. 
\end{thm}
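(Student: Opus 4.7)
The plan is to proceed by strong induction on $n$ with step $5$, guided by the recurrence $a(n)=a(n-5)+(n-2)$ of Observation~\ref{formula:rec}. I first recast the problem geometrically: identify the vertex $\{i,j\}\in V(F_2(P_{n+1}))$ (with $i<j$) with the lattice point $(i,j)\in T_{n+1}:=\{(i,j):1\leq i<j\leq n+1\}$. A direct token-pushing argument on the path shows that the graph distance in $F_2(P_{n+1})$ between $\{i_1,j_1\}$ and $\{i_2,j_2\}$ (both sorted) equals the $\ell_1$-distance $|i_1-i_2|+|j_1-j_2|$, since on a path the two tokens cannot cross and can be moved one step at a time in any desired order. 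The theorem therefore becomes: the maximum subset of $T_{n+1}$ with pairwise $\ell_1$-distance at least $3$ has size exactly $a(n)$. I would verify the five base cases $n\in\{6,\dots,10\}$ by exhibiting explicit optimal packings (for instance $\{(1,2),(1,5),(2,7),(3,4),(4,6),(6,7)\}$, of size $6=a(6)$, handles $n=6$) together with a finite small-case check for maximality.

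For the inductive step I decompose $T_{n+1}=T_{n-4}\sqcup R_n$, where the L-shaped strip $R_n:=\{(i,j)\in T_{n+1}:j\geq n-3\}$ consists of the five rightmost columns and has exactly $5n-10$ cells. For the lower bound I start from an optimal packing $S_0\subset T_{n-4}$ provided by the inductive hypothesis and adjoin a carefully chosen set $S_1\subset R_n$ of $n-2$ points whose pairwise $\ell_1$-distances, and distances from $S_0$, are all at least $3$. An explicit periodic construction in $R_n$, adapted from the perfect $\ell_1$-``Lee'' code $\{(i,j):i+2j\equiv c\pmod 5\}$ with small boundary corrections depending on $n\bmod 5$, produces such an $S_1$; direct enumeration of the chosen residue class inside $R_n$ gives exactly $n-2$ cells, and compatibility with $S_0$ is arranged by taking $S_0$ from the same residue class (with analogous boundary adjustments).

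For the upper bound, let $S$ be any packing of $T_{n+1}$. Its restriction $S\cap T_{n-4}$ is automatically a packing of $T_{n-4}$, hence of size at most $a(n-5)$ by induction, so the task reduces to proving the \emph{strip bound} $|S\cap R_n|\leq n-2$. Re-parametrising $R_n$ as the staircase $\{(i,k):1\leq k\leq 5,\ 1\leq i\leq n+k-5\}$ of area $5n-10$, the strip bound is equivalent to the statement that any $\ell_1$-distance-$3$ subset of this staircase has size at most $n-2$. A naive volume count matches this, $\lfloor (5n-10)/5\rfloor=n-2$, only if each closed $\ell_1$-ball around a strip packing vertex accounts for five distinct strip cells; the main obstacle I foresee is that balls centred near the top-right corner or the slanted edge of the staircase may lose cells outside $T_{n+1}$ or leak into the column $j=n-4$ of the inner triangle. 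I would resolve this by constructing an explicit partition of the staircase into $n-2$ ``plus-shaped'' blocks---slightly deformed Lee spheres of radius one, whose exact shapes depend on $n\bmod 5$---each of which can be shown to contain at most one packing vertex; the deformation absorbs the missing boundary cells into the appropriate neighbouring block. Combining the strip bound with the inductive bound on the inner triangle yields $|S|\leq a(n-5)+(n-2)=a(n)$, completing the induction and hence the theorem.
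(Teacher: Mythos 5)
Your geometric recasting and your lower--bound construction are essentially the paper's: the paper also works in the staircase $T(n)\subset\Z^2$ and builds the packings $A_{n,t}$ from the residue classes of $f(i,j)=(i+2j)\bmod 5$ with boundary corrections depending on $n\bmod 5$. The problem is your upper bound. The claimed \emph{strip bound} $|S\cap R_n|\leq n-2$ is false: the maximum packing of the five-column staircase $R_n$ has size $n-1$, not $n-2$. The diagonal corner contributes one extra vertex beyond the volume count $\lfloor(5n-10)/5\rfloor$ --- exactly the boundary effect you flag as ``the main obstacle'' and hope to absorb by deforming the plus-shaped blocks. It cannot be absorbed. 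For $n=6$ the strip (columns $j\in\{3,\dots,7\}$ of your $T_7$, $20$ cells) contains the packing $\{(2,3),(1,5),(4,5),(3,7),(6,7)\}$ of size $5>n-2=4$, so no partition into $n-2$ blocks each meeting $S$ at most once can exist. In general the paper proves $\rho(T^5(m))=m-1$ (Lemma \ref{maximofranja10}(i)), with the extremal example being a residue class of $f$ restricted to the strip together with the corner $(m,m)$.

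Consequently your induction only yields $\rho(T(n))\leq a(n-5)+(n-1)=a(n)+1$, and closing that gap of one is precisely where all the difficulty of the paper lies. The paper's route is: assume a maximum packing $S$ of $T(n)$ simultaneously restricts to maximum packings of $T(n-5)$, of the five-row strip, of $T(n-10)$, and of the ten-row strip (whose packing number $2n-8$ is already one less than twice the five-row bound, by a separate argument); then show, via a computer-verified classification of the $13$-point packings of $T(10)$ and of $26$ boundary configurations, that one can always exchange a few vertices of $S$ near the diagonal to produce a packing of $T(n-5)$ of size $a(n-5)+1$, contradicting the induction hypothesis. Some such global interaction argument between the strip and the inner triangle is unavoidable; a bound proved column-strip by column-strip in isolation cannot give $a(n)$, because the strips genuinely admit $n-1$ points each.
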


Due to the values in Table~\ref{tablaan} it is enough to consider $n >10$.  
For readability, we will separate the proof of Theorem \ref{thm:main} in the following lemmas.  

\begin{lem}\label{lemma:lower}
Let $n> 10$ be an integer. Then A085680($n+1$)$\geq a(n)$. 
\end{lem}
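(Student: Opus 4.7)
The plan is to exhibit an explicit packing of $F_2(P_{n+1})$ of cardinality $a(n)$.  I first reduce to a geometric problem: identifying the vertex $\{i,j\}$ (with $i<j$) with the lattice point $(i,j)$ embeds $V(F_2(P_{n+1}))$ as the triangular region $T_n := \{(i,j)\in\mathbb{Z}^2 : 1\le i<j\le n+1\}$.  A short case split on the relative positions of two vertices shows that between any two points one can always find a grid path of length equal to their $L_1$ distance lying entirely in $T_n$ (by adjusting the two coordinates in the appropriate order), so the graph distance in $F_2(P_{n+1})$ equals the $L_1$ distance.  A packing of $F_2(P_{n+1})$ is therefore precisely a subset of $T_n$ whose points are pairwise at $L_1$ distance at least $3$.

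The packing template is the Lee-sphere sublattice $\Lambda_c := \{(i,j)\in\mathbb{Z}^2 : 2i+j\equiv c\pmod 5\}$.  Checking the twelve nonzero displacements of $L_1$-norm at most $2$, one verifies that none satisfies $2\Delta i+\Delta j\equiv 0\pmod 5$, so each $\Lambda_c$ is a distance-$3$ $L_1$-packing of $\mathbb{Z}^2$.  A direct computation gives the key strip identity: for every $j_0\ge 1$ and every $c\in\{0,1,2,3,4\}$,
\[
\bigl|\Lambda_c\cap\{(i,j)\in T_n : j_0\le j\le j_0+4\}\bigr| \;=\; j_0+1,
\]
i.e.\ any five consecutive columns of $T_n$ contribute exactly $j_0+1$ lattice points, independently of the shift $c$.

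The proof then proceeds by strong induction on $n$ with step $5$, mirroring the recurrence $a(n)=a(n-5)+(n-2)$ of Observation~\ref{formula:rec}.  Five explicit base packings for $n\in\{11,12,13,14,15\}$, one per residue class modulo $5$, take care of the base cases.  For the inductive step, assume $S_{n-5}\subseteq T_{n-5}$ is a packing of size $a(n-5)$ whose uppermost columns agree with a fixed class $\Lambda_c$, and set $S_n := S_{n-5}\cup(\Lambda_c\cap R_n)$ where $R_n := \{(i,j)\in T_n : n-3\le j\le n+1\}$.  The strip identity gives $|\Lambda_c\cap R_n|=n-2$, so $|S_n|=a(n-5)+(n-2)=a(n)$.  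Pairs inside $\Lambda_c$ lie at $L_1$ distance $\ge 3$ automatically, while any non-lattice ``corner'' points of $S_{n-5}$ are confined to a fixed finite set of early columns (small $j$), so their column distance to $R_n$ already exceeds $3$ once $n$ is large enough.

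The main obstacle is constructing the five base packings compatibly.  Since $|\Lambda_c\cap T_n|$ is short of $a(n)$ by a constant --- essentially the $14,18,20$ offsets in the piecewise formula for $a(n)$ --- each base packing must augment $\Lambda_{c(n_0)}\cap T_{n_0}$ by a small number of corner points near the lower-left boundary of $T_{n_0}$.  Two verifications are needed: (i) the corner points are at pairwise $L_1$ distance $\ge 3$ from one another and from the points of $\Lambda_{c(n_0)}$; and (ii) they remain at distance $\ge 3$ from every strip $R_m$ added at later induction steps.  The bookkeeping amounts to a finite case analysis on $n\bmod 5$, with the shift $c(n_0)$ and the corner patterns chosen consistently so that the global packing structure persists.
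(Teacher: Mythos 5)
Your skeleton matches the paper's: embed $F_2(P_{n+1})$ as a triangular region of the integer grid, observe that the preimages of the colouring $(i,j)\mapsto (i+2j) \bmod 5$ (your $\Lambda_c$, up to swapping the coordinates) are distance-$3$ packings, count the lattice points in each block of five consecutive columns independently of the colour class (the paper's Proposition~\ref{values-franja}), and make up the deficit against $a(n)$ with a constant number of extra points near the boundary. The difference in packaging is that the paper does not induct: for each residue of $n$ modulo $5$ it writes down one explicit set $A_{n,t}$, valid for all $n$ in that class, obtained from a single chromatic class by deleting a fixed handful of points and inserting another fixed handful, and then verifies $|A_{n,t}|=a(n)$ directly by summing the strip identity.

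The gap is that the deferred ``finite case analysis'' is the entire content of the proof, and the particular form your induction imposes on it is not known to be achievable. Your step $S_n=S_{n-5}\cup(\Lambda_c\cap R_n)$ forces every base packing to have size exactly $a(n_0)$ \emph{and} to agree with the pure class $\Lambda_c$ on its top five columns, with all augmentation confined to the lower-left corner. Neither the existence of such base packings nor the corner patterns are exhibited, and you cannot simply import the paper's sets: four of the paper's five constructions ($t=1,3,4,0$) place extra points at the top-right corner --- $A_{n,1}$, $A_{n,3}$ and $A_{n,0}$ adjoin $(n,n)$, and $A_{n,4}$ performs a swap involving $(n-3,n)$, $(n-2,n-2)$ and $(n,n)$ --- precisely because a pure chromatic class contributes only $n-2$ points to the top five columns and the missing weight must be recovered somewhere. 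Whether it can always be recovered entirely at the bottom, so that the top stays pure and your induction closes, is itself a nontrivial claim requiring exactly the explicit verification you have omitted. As written the proposal is a plausible plan rather than a proof; to complete it you must either exhibit the five base packings with the stated boundary behaviour and check all the distance conditions, or drop the induction and give closed-form constructions for each residue class as the paper does.
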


\begin{lem}\label{lemma:upper}
Let $n> 10$ be an integer. Then A085680($n+1$)$\leq a(n)$. 
\end{lem}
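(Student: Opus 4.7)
The plan is to prove Lemma \ref{lemma:upper} by strong induction on $n$ using the recurrence $a(n) = a(n-5) + (n-2)$ from Observation \ref{formula:rec}. The base cases $n \in \{11, 12, 13, 14, 15\}$ follow from the tabulated values of $A085680$ in Table \ref{tablaan}. For the inductive step with $n > 15$, fix any packing set $S$ of $F_2(P_{n+1})$ and split it according to the larger index of each $2$-subset: let $S_L := \{\{i,j\} \in S : j \leq n-4\}$ and $S_R := S \setminus S_L$. The induced subgraph of $F_2(P_{n+1})$ on $\{\{i,j\} : j \leq n-4\}$ is naturally isomorphic to $F_2(P_{n-4})$, and distances in an induced subgraph are never smaller than in the host, so $S_L$ is a packing of $F_2(P_{n-4})$. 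The inductive hypothesis thus yields $|S_L| \leq a(n-5)$, and the desired bound $|S| \leq a(n)$ reduces (via Observation \ref{formula:rec}) to the slab inequality $|S_R| \leq n - 2$.

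To establish the slab bound, I would work inside the induced subgraph on $V_R := \{\{i, j\} : j \geq n-3\}$, a ``staircase'' of five rows $R_j$ (for $j = n-3, \ldots, n+1$) of sizes $n-4, n-3, n-2, n-1, n$, joined by vertical edges, with total size $|V_R| = 5n - 10$. The elements of $S_R$ must satisfy pairwise graph distance $\geq 3$ in $F_2(P_{n+1})$, which in the triangular index region translates to pairwise $\ell_1$-distance $\geq 3$. The plan is to partition $V_R$ into exactly $n-2$ blocks, each of diameter at most $2$ in $F_2(P_{n+1})$; since any packing contains at most one vertex per such block, the inequality $|S_R| \leq n-2$ follows. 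The natural candidate is to tile the ``bulk'' $\{(i,j) \in V_R : i \leq n-4\}$ by the five-vertex plus-shaped closed neighborhoods $B(v,1)$ of suitably chosen interior vertices (mirroring the standard mod-$5$ cross-tiling of $\mathbb{Z}^2$), and to cover the triangular corner $\{(i,j) \in V_R : i \geq n-3\}$ by a small number of hand-crafted blocks.

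The principal obstacle is precisely this slab decomposition: the staircase boundary of $V_R$ forces both the interior tiling pattern and the corner blocks to depend on the residue class of $n$ modulo $5$, mirroring the three-case definition of $a(n)$. A careful case analysis organized by $n \bmod 5$ (or, as an alternative, the construction of an explicit fractional weighting $w : V_R \to \mathbb{R}_{\geq 0}$ of total mass $n-2$ satisfying $\sum_{u \in B(v,1)} w(u) \geq 1$ for every $v \in V_R$, which bounds $|S_R|$ by LP duality) is needed to verify that exactly $n-2$ diameter-$\leq 2$ blocks suffice and that every vertex of $V_R$ is covered. Combined with the induction bound on $|S_L|$, this yields $|S| = |S_L| + |S_R| \leq a(n-5) + (n-2) = a(n)$, completing the proof.
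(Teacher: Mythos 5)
Your reduction to the slab inequality $|S_R|\leq n-2$ is the fatal step: that inequality is false. The five-row staircase you call $V_R$ is exactly the subgraph the paper denotes $T^5(n)$, and Lemma~\ref{maximofranja10}~(i) shows its packing number is $n-1$, not $n-2$. Moreover, the extremal example is not an artifact of working in the induced subgraph: taking the restriction of a chromatic class $f^{-1}(c)$ to the slab (which is a packing of the \emph{whole} graph, by Proposition~\ref{prop:H}) and adding the corner vertex $(n,n)$ produces $n-1$ vertices inside the slab that form part of a genuine packing of $T(n)$. Consequently no partition of $V_R$ into $n-2$ blocks of diameter at most $2$ can exist, and no fractional weighting of total mass $n-2$ dominating every closed ball can exist either, since either object would certify $\rho(T^5(n))\leq n-2$. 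The best your decomposition can give is $|S|\leq a(n-5)+(n-1)=a(n)+1$, which is off by one.

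That off-by-one gap is precisely where all the work in the paper lives. The actual proof assumes, for contradiction, that \emph{simultaneously} $|S\cap T(n-5)|=a(n-5)$, $|S\cap T(n-10)|=a(n-10)$, $|S^5(n)|=n-1$ and $|S^{10}(n)|=2n-8$ (the last via a separate result $\rho(T^{10}(n))=2n-8$), then carries out a detailed structural analysis of such extremal configurations: Proposition~\ref{proposicionunounofranjas} and Lemma~\ref{claim1} pin down column counts, computer enumeration identifies the finitely many extremal local patterns near the corner (Figures~\ref{fig:unicosT10} and~\ref{fig:muestra}), and in each case an exchange argument produces a packing of $T(n-5)$ of size $a(n-5)+1$, contradicting the induction hypothesis. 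If you want to salvage your plan, you must either prove the conditional statement that a maximum packing of $T(n-5)$ cannot coexist with an $(n-1)$-vertex packing of the slab, or switch to a ten-row slab where the analogous bound $2n-8$ (which equals $(n-1)+(n-6)-1$, i.e.\ strictly less than two independent five-row slabs would allow) does part of that work; either way a purely local tiling or LP-weighting argument on a single five-row slab cannot succeed. A minor additional point: the paper's induction starts at $n\geq 50$ with base cases drawn from the known values of A085680 up to $n=49$, because several auxiliary lemmas require $n\geq 27$; your base cases $n\in\{11,\dots,15\}$ would not suffice even if the step were repaired as stated.
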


\subsection{Notation}\label{notacion}

We recall that the {\em infinite integer grid} $H$ is the infinite plane graph with vertex set $\Z^2=\Z\times \Z$ that has has an edge joining the vertices  $(i,j)$ and $(i',j')$ if and only if  $|i-i'|+|j-j'| = 1$. As usual let $\Z_5=\{0,1,2,3,4\}$ denote the group of integers modulo $5$.

The homomorphism $f \colon \Z^2\to \Z_5$ defined by $f(i, j)=(i+2j) \bmod {5}$ will play a central role in the next section. 

Let $T(n)$ be the subgraph of $H$ induced by the following vertex subset of $H$:   
\[
V(n)= \left\{(i,j):  1 \leq i,j\leq n \mbox{ and } i-j\leq 0\right\}.
\]

Note that $T(n)\simeq F_2(P_{n+1})\simeq \Gamma_{n+1}$. In view of this, for the rest of the paper we shall use $T(n)$ instead of $F_2(P_{n+1})$. 

%%%%%%%%%%%%%%%%%%%%%%%%%%%%%%%%%%%%%%%%%%%%%%%%%%%%%%%%%%%%%%%%%%%%%%%%%%%%%%%%%%%%%%%%%%
%%%%%%%%%
 
For $i,j,k,r \in \{1,\ldots ,n\}$, with $i\leq j$ and $k< r$, let $T_{i,j}^{k,r}(n)$ be the subgraph of $T(n)$ induced by the following set of vertices: 
\[
\lbrace (x,y): i\leq x\leq j, k\leq y\leq r\rbrace.
\]
 For brevity, if $\ell$ is a positive integer such that  $n-\ell \geq 0$, we write $T_{i,j}^{\ell}(n)$ and $T_i^{\ell}(n)$ instead of $T_{i,j}^{n-\ell+1,n}(n)$ and $T_{i,i}^{n-\ell+1,n}(n)$, respectively. Moreover, if $i=1$ and $j=n$ we will use $T^{\ell}(n)$ and $T^{k,r}(n)$ instead of $T_{1,n}^{\ell}(n)$ and $T_{1, n}^{k,r}(n)$, respectively. If $S$ is a packing set of $T_{i,j}^{k,r}(n)$ we define $S_{i,j}^{k,r}(n)=T_{i,j}^{k,r}(n)\cap S$. In Figure \ref{fig:ejemplos} we shown $T(17)$ and  several such subgraphs.

\begin{figure}
\centering
  \begin{minipage}{0.53\textwidth}
    \centering
    \includegraphics[width=0.95\textwidth]{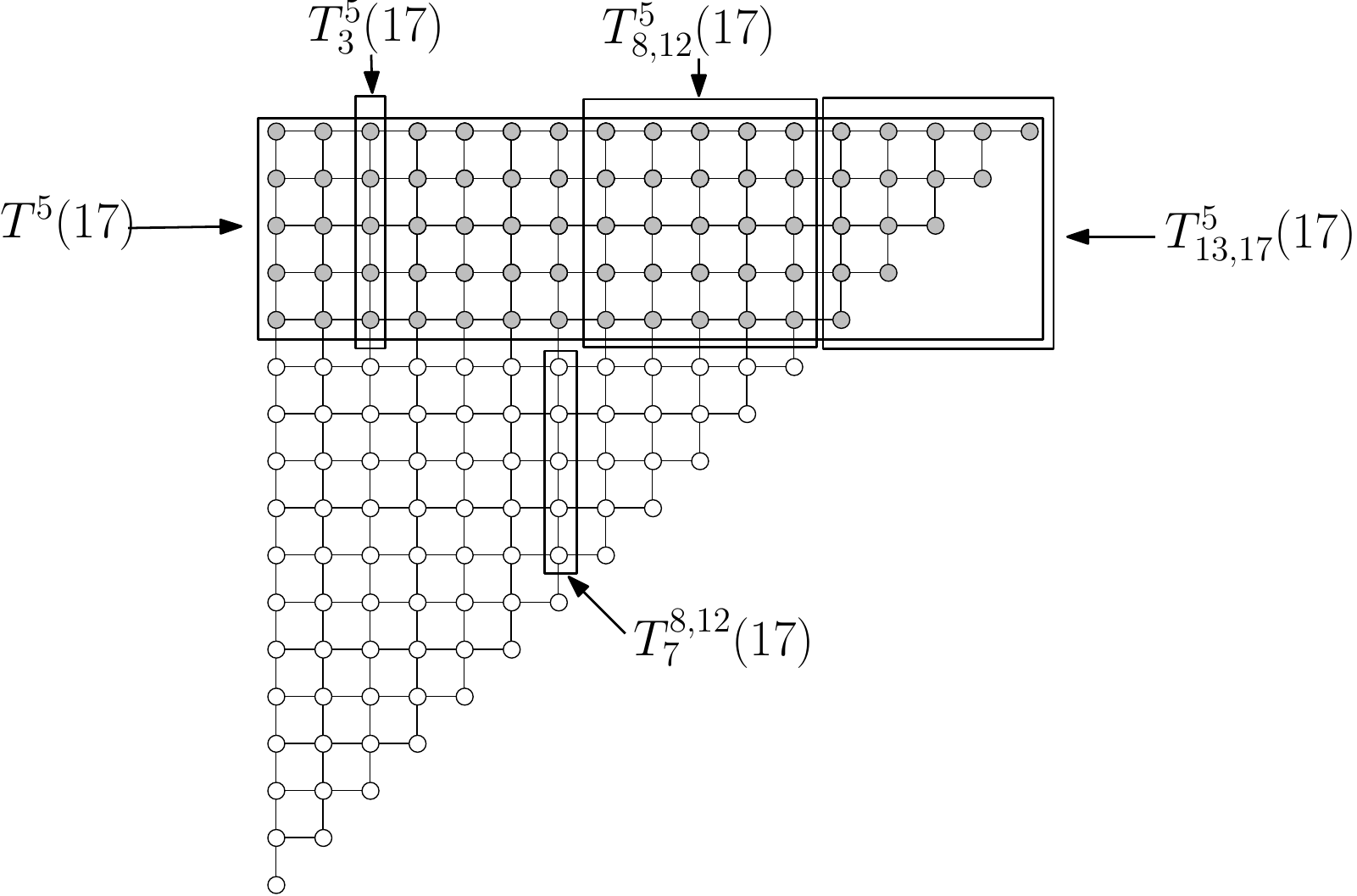}\\
    {\small (a)} 
  \end{minipage}
  \hspace{1mm}
  \begin{minipage}{0.43\textwidth}
    \centering
    \includegraphics[width=0.95\textwidth]{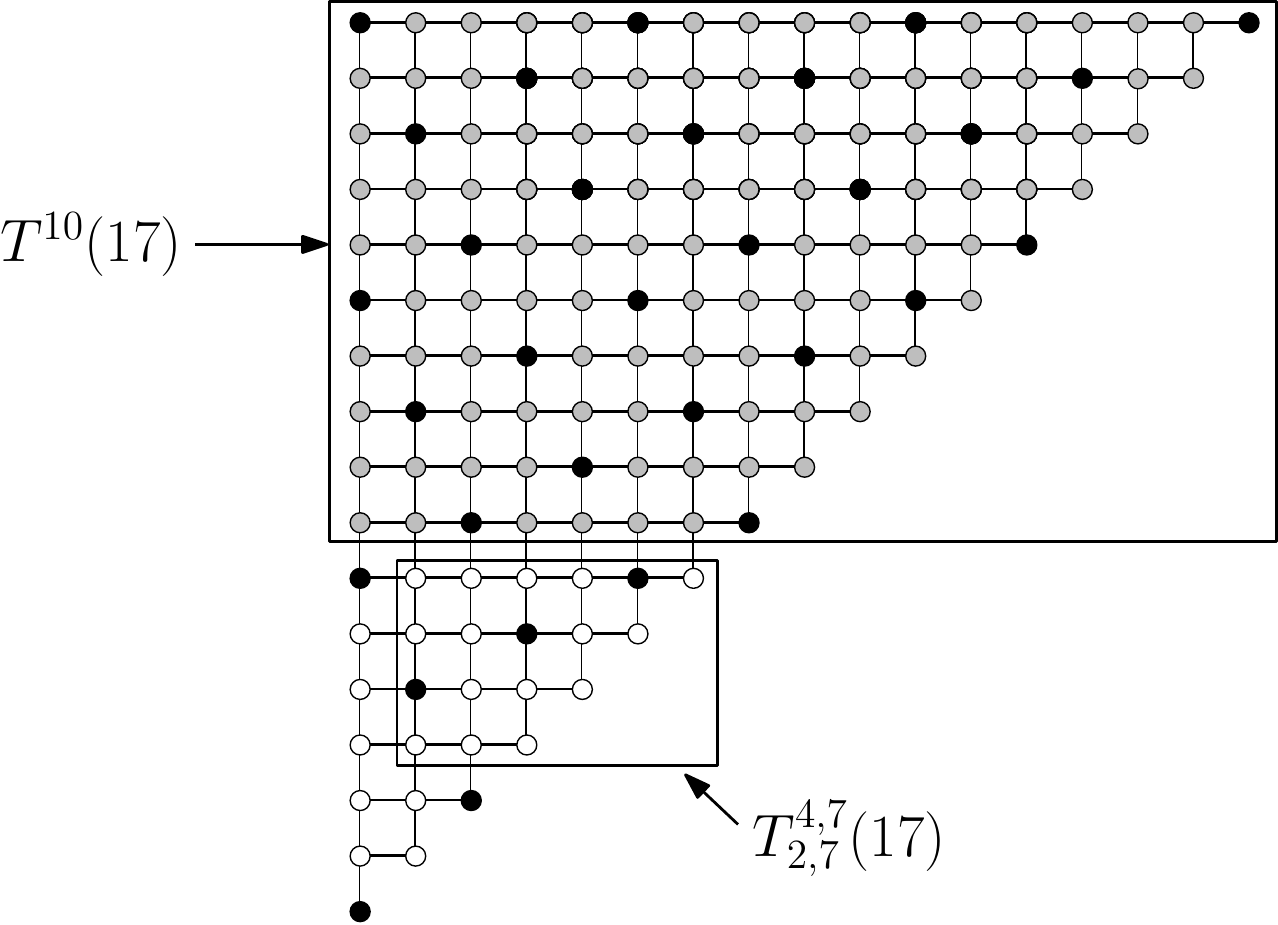}\\
    {\small (b)} 
  \end{minipage}
  \caption{\small Here we shown two copies of $T(17)$. (a) The subgraph of $T(17)$ induced by the gray vertices is $T^5(17)$. (b) The set of black vertices $S$ is a packing set 
  of $T(17)$, and those in $T^{10}(17)$ form $S^{10}(17)$.}
  \label{fig:ejemplos}
\end{figure}

%%%%%%%%%%%%%%%%%%%%%%%%%%%%%%%%%%%%%%%%%%%%%%%%%%%%%%%%%%%%
%%%%%%%%%%%%%%%%%%%%%%%%%%%%%%%%%%%%%%%%%%%%%%%%%%%%%%%%%%%%

\section{The sequence $\rho(T(n))$ is lower bounded by $a(n)$}

Our goal in this section is to show Lemma \ref{lemma:lower} by constructing packing sets for $T(n)$ with cardinality $a(n)$.

In this section we will use slight refinements of the preimages $f^{-1}(\ell)$, $\ell\in \Z_5$ ($f$ is the homomorphism defined in previous section) in order to get the constructions behind of our lower bound for $\rho(T(n))$. For example, in Figure \ref{fig:fichasdeP7-1} (left) we have $T(7)\cap f^{-1}(3)$.

We say that each of these five preimages of $f$ satisfies the ``horse pattern'' because  each such set $f^{-1}(\ell)$ can be produced by certain types of horse movements in the game of chess (with the vertex set of $H$ as the chess-board). 
As we shall see, the inverse image $f^{-1}(\ell)$ of each $\ell\in \Z_5$ under $f$ is a vertex subset of $H$ which is a packing set of $H$. In this section we exhibit some basic properties of such preimages. In particular, our main goal here is to show Proposition \ref{values-franja}, which determines the size of such preimages on $T(n)$. 
  
 For $t\in\Z^+$ and $(i,j)\in \Z^2$, we define $f_t \colon \Z^2\to \Z_5$ as follows: $f_t(i,j)=f(i-t,j)$. 
We call $f_t$ the {\em $t$-translation} of $f$.
 
In the following proposition we establish some basic facts involving $H, f$ and $f_t$. The proof of each item is straightforward from the  definitions or previous items. 

\begin{prop}\label{prop:H}
Let $\ell\in\Z_5$, $i,i',j,j'\in\Z$, and let $H$ and $f$ be as above. Then 
\begin{itemize}
\item[(i)] $f$ is a proper vertex $5$-coloring of $H$.
\item[(ii)]  $f^{-1}(\ell)$ is a packing set of $H$. 
\item[(iii)] If $G$ is a subgraph of $H$, then $G\cap f^{-1}(\ell)$ is a packing set of $G$. 
\item[(iv)]  $f(i,j) = f(i,j')$ if and only if $j-j'$ is a multiple of $5$. 
\item[(v)]  $f(i,j) = f(i',j)$ if and only if  $i-i'$ is a multiple of $5$. 
\item[(vi)] $f(i+1,i) = f(j+1,j)$ if and only if $i-j$ is a multiple of $5$. 
\item[(vii)] If $t\in \Z^+$, then $f^{-1}(f(i,j))=f_t^{-1}(f(i-t,j))$. 
\end{itemize} 
\end{prop}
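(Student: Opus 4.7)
The proposition is a bundle of routine verifications; my plan is to dispatch each item via direct computation using the fact that $H$ carries the $\ell^1$-metric $d_H((i,j),(i',j')) = |i-i'|+|j-j'|$.

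For (i) and (ii), the unifying observation is: if $(i,j),(i',j')\in \Z^2$ with $(\Delta i,\Delta j)=(i-i',j-j')$, then $f(i,j)-f(i',j') \equiv \Delta i + 2\Delta j \pmod 5$. I would run through the pairs $(\Delta i,\Delta j)$ with $|\Delta i|+|\Delta j|\in\{1,2\}$: the distance-$1$ differences give residues $\pm1$ or $\pm2$, and the distance-$2$ differences $(\pm 2,0),(0,\pm2),(\pm1,\pm1)$ give residues $\pm2,\pm4,\pm3,\pm1$. Since none of these lie in $5\Z$, any two distinct vertices with equal $f$-value satisfy $d_H\geq 3$. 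This proves (ii), and (i) follows a fortiori from the distance-$1$ computation. Item (iii) is immediate from (ii) together with the monotonicity $d_G(u,v)\geq d_H(u,v)$ for any subgraph $G$ of $H$ and $u,v\in V(G)$.

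For items (iv)--(vi), I would argue by reducing the equality $f(p)=f(q)$ to a one-variable congruence and invoke $\gcd(2,5)=\gcd(3,5)=1$. Concretely: (iv) reduces to $2(j-j')\equiv 0 \pmod 5$, hence $j\equiv j' \pmod 5$; (v) reduces to $i-i'\equiv 0 \pmod 5$; and for (vi), since $f(i+1,i)=3i+1\pmod 5$ and $f(j+1,j)=3j+1\pmod 5$, equality is equivalent to $3(i-j)\equiv 0 \pmod 5$, equivalently $i\equiv j \pmod 5$.

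Finally, (vii) is a tautology once both sides are unfolded. I would compute
\[
f_t^{-1}(f(i-t,j)) = \{(x,y)\in\Z^2 : (x-t)+2y \equiv (i-t)+2j \pmod 5\},
\]
and observe that the $-t$ terms cancel on both sides, reducing the condition to $x+2y\equiv i+2j\pmod 5$, which is exactly the membership condition for $f^{-1}(f(i,j))$. The only ``obstacle,'' if one can call it that, is bookkeeping in (ii): one must verify all distance-$2$ displacement vectors, not just the horizontal and vertical ones. Everything else is immediate from the invertibility of $2$ and $3$ modulo $5$.
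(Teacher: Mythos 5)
Your verification is correct and matches what the paper intends: the paper omits the proof entirely, declaring each item ``straightforward from the definitions or previous items,'' and your direct computations (checking all displacement vectors of $\ell^1$-length $1$ and $2$ for (i)--(ii), monotonicity of subgraph distance for (iii), and invertibility of $2$ and $3$ modulo $5$ for (iv)--(vii)) are exactly the routine verifications being alluded to. Nothing is missing.
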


We recall that a {\em $5$-coloring} of $H$ is a function $c\colon V(H) \to \Z_5$ such that if $u$ and $v$ are adjacent vertices in $H$, then $c(v)\neq c(u)$. 
Thus, $f$ is a proper $5$-coloring of $H$. We shall refer to the elements of $\Z_5$ as the colors of $f$ and for $\ell\in \Z_5$, we shall refer to $f^{-1}(\ell)$ as the $\ell$ chromatic class of $H$ under $f$.

We remark that Proposition \ref{prop:H} $(vii)$ establishes that if $\ell$ and $\ell'$ are colors in $\Z_5$, then the chromatic class $f^{-1}(\ell)$ 
is a homothetic copy of $f^{-1}(\ell')$.

 \begin{figure}[H]
\begin{center}
\includegraphics[width=.50\textwidth]{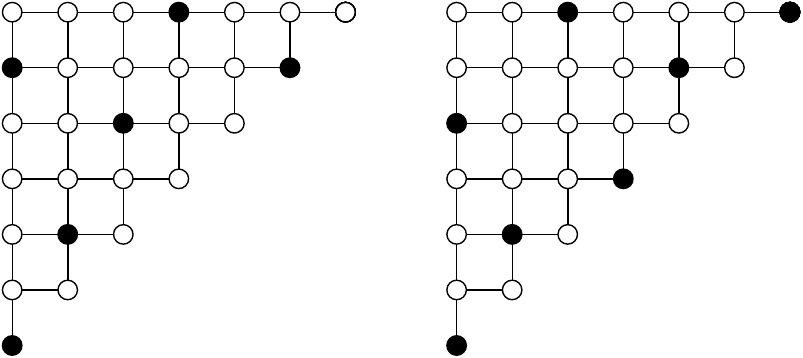}
\caption{\small Two copies of $T(7)$. The two sets of black vertices are packing sets of $T(7)$. The packing set on the left is $T(7)\cap f^{-1}(3)$.}
\label{fig:fichasdeP7-1}
\end{center}
\end{figure}
 
\begin{prop}\label{values-franja}
Let $i\in\{5,\dots, n\}$. If $m \in \Z_5$, then $|T^{i-4,i}(n)\cap f^{-1}(m)|=i-2$. 
\end{prop}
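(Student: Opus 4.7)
The plan is to proceed by induction on $i$. First I would note that $T^{i-4,i}(n)$ decomposes by rows: for each $y \in \{i-4, i-3, \ldots, i\}$, the row consists of the vertices $(1,y), (2,y), \ldots, (y,y)$, exactly $y$ of them (here I use that $y \le i \le n$, so the constraint $x \le y$ is the binding one). Summing gives $|V(T^{i-4,i}(n))| = (i-4)+(i-3)+(i-2)+(i-1)+i = 5(i-2)$. Since $f$ has five color classes, the claimed count $i-2$ is exactly the average, so once it is established for the base value it will suffice to check that each color class gains exactly one vertex when $i$ is replaced by $i+1$.

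For the base case $i=5$, I would enumerate: the fifteen vertices $(x,y)$ with $1 \le x \le y \le 5$ have colors $f(x,y)=(x+2y) \bmod 5$, and a direct tabulation shows that each color in $\Z_5$ appears exactly three times. Alternatively one can peel off the column $x=1$, whose five vertices carry five distinct colors by Proposition~\ref{prop:H}$(iv)$, and verify by inspection that each color occurs exactly twice in the remaining staircase $\{(x,y) : 2 \le x \le y \le 5\}$.

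For the inductive step, I would compare $T^{i-4,i}(n)$ with $T^{i-3,i+1}(n)$: the latter is obtained from the former by deleting row $y=i-4$ (vertices $(1,i-4), \ldots, (i-4,i-4)$) and inserting row $y=i+1$ (vertices $(1,i+1),\ldots,(i+1,i+1)$). The critical identity is $2(i+1)-2(i-4)=10\equiv 0\pmod 5$, so that $f(x,i+1)=f(x,i-4)$ for every $x$. Hence the deleted row and the first $i-4$ vertices of the inserted row match color by color and cancel. The net change therefore reduces to the five extra vertices $(x,i+1)$ with $x \in \{i-3,i-2,i-1,i,i+1\}$, whose colors $(x+2(i+1))\bmod 5$ are five consecutive residues mod $5$ and hence hit each color in $\Z_5$ exactly once. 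So every color count grows by exactly $1$, from $i-2$ to $(i+1)-2$, closing the induction.

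The only real obstacle is bookkeeping the trapezoidal shape of $T^{i-4,i}(n)$ correctly and making sure row $i+1$ really does lie inside $T(n)$ (it does, since in the inductive step we have $i+1 \le n$, which is exactly the range over which the next case of the proposition is asserted). Once the shape is pinned down, the modular identity $2(i+1)\equiv 2(i-4) \pmod 5$ carries the whole argument essentially for free.
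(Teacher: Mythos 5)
Your proof is correct, but it is organized quite differently from the paper's. You fix a color and induct on the strip index $i$: the vertical period-$5$ identity $f(x,i+1)=f(x,i-4)$ (Proposition~\ref{prop:H}$(iv)$) cancels the deleted bottom row against the first $i-4$ vertices of the inserted top row, and the five leftover vertices $(i-3,i+1),\dots,(i+1,i+1)$ carry five distinct colors (Proposition~\ref{prop:H}$(v)$), so each class grows by exactly one; a $15$-vertex tabulation settles the base case $i=5$. The paper instead keeps the strip fixed and compares two colors at a time: it applies the horizontal $1$-translation $f_1$ and checks that the column entering the strip on the left and the diagonal leaving it each contain exactly one vertex of the given color (items $(iv)$, $(vi)$ and $(vii)$ of Proposition~\ref{prop:H}), concluding that all five chromatic classes are equinumerous on $T^{i-4,i}(n)$ and then dividing the total $5(i-2)$ by $5$. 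The paper's route yields the equidistribution of all five classes directly, with no base-case computation and no induction; yours is slightly more elementary in that it never invokes the homothety statement $(vii)$, at the cost of an explicit enumeration at $i=5$ and the bookkeeping of the trapezoidal shape, which you handle correctly (including the needed bound $i+1\leq n$ in the inductive step). Both arguments are sound and of comparable length.
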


\begin{proof} 
First we show that $|T^{i-4,i}(n)\cap f^{-1}(m)|=|T^{i-4,i}(n)\cap f^{-1}(m')|$ for any $m,m'\in \Z_5$. From Proposition  \ref{prop:H} $(v)$ we know that the colors $c_1:=f(1,i)$, $c_2:=f(2,i)$,$\ldots $,$c_5:=f(5,i)$ are distinct. Thus, it is enough to show that  $|T^{i-4,i}(n)\cap f^{-1}(c_t)|=|T^{i-4,i}(n)\cap f^{-1}(c_{t+1})|$ for any $t\in \{1,\dots, 4\}$. Let 
$t\in \{1,\dots, 4\}$. To help comprehension, and without loss of the generality, let $c_t$ be the blue color and let $c_{t+1}$ be the red color. Let $b$ be the number of vertices of $T^{i-4,i}(n)$ which are colored blue by $f$. Formally, $b:=|T^{i-4,i}(n)\cap f^{-1}(c_t)|$. Note that the number $b_1$ of vertices of $T^{i-4,i}(n)$ which are colored blue by $f_1$ is $b+p-q$, where $p$ denotes the number of vertices in $\{(0,i-4),(0,i-3), (0,i-2),(0,i-1),(0,i)\}$ that are colored blue by $f$, and $q$ denotes the number of vertices in $\{(i-4,i-4),(i-3,i-3), (i-2,i-2),(i-1,i-1),(i,i)\}$ that are colored blue by $f$. From Proposition \ref{prop:H} $(iv)$ and $(vi)$ we know, respectively, that $p=1$ and $q=1$. Hence $b=b_1$. But by Proposition \ref{prop:H} $(vii)$ we know that the set of vertices of $T^{i-4,i}(n)$ that are colored red by $f$ is equal to the set of vertices of $T^{i-4,i}(n)$ that are colored blue by $f_1$. This and $b=b_1$ imply that $|T^{i-4,i}(n)\cap f^{-1}(c_t)|=|T^{i-4,i}(n)\cap f^{-1}(c_{t+1})|$, as desired.
 
As $T^{i-4,i}(n)$ has $(i-4)+(i-3)+(i-2)+(i-1)+i=5(i-2)$ vertices, and $|T^{i-4,i}(n)\cap f^{-1}(m)|=|T^{i-4,i}(n)\cap f^{-1}(m')|$ for any $m,m'\in \Z_5$, then $|T^{i-4,i}(n)\cap f^{-1}(m)|=i-2$ for any $m\in \Z_5$.
\end{proof}

%%%%%%%%%%%%%%%%%%%%%%%%%%%%%%%%%%%%%%%%%%%%%%%%%%%%%%%%%%%%%%%%%
%%%%%%%%%%%%%%%%%%%%%%%%%%%%%%%%%%%%%%%%%%%%%%%%%%%%%%%%%%%%%%%%%
  
\subsection{Proof of Lemma \ref{lemma:lower}}

We recall that $n$ is an integer greater to $10$. Depending on the value of 
$t:=n \bmod{5}$, we will give a packing set $A_{n,t}$ of $T(n)$ such that 
$|A_{n,t}| = a(n)$, as required.   

Consider the following vertex subsets of $T(n)$ (see Figure \ref{fig:construction}). 
\begin{itemize}

\item For $n\equiv 1 \mbox{ (mod 5)}$, we define
\[ 
A_{n,1}=(T(n)\cap f^{-1}(4))  \bigcup \{(1, 1), (n, n)\}.
\]

\item For $n\equiv 2 \mbox{ (mod 5)}$, we define
\[
A_{n,2}=\left((T(n)\cap f^{-1}(0))\setminus \{ (1, 2), (2, 4)\}\right) \bigcup \{(1, 1), (1, 4), (3, 3)\}.
\]

\item For $n\equiv 3 \mbox{ (mod 5)}$, we define
\[
A_{n,3}=\left((T(n)\cap f^{-1}(0)) \setminus \{ (1, 2), (2, 4)\}\right) \bigcup \{(1, 1), (1, 4), (3, 3), (n, n)\}.
\]

\item For $n\equiv 4 \mbox{ (mod 5)}$, we define
\[A_{n,4}=\left((T(n)\cap f^{-1}(0)) \setminus  \{(1, 2), (2, 4), (n-2, n), (n-3, n-2)\}\right) \bigcup\]
\[\{(1, 1), (1, 4), (3, 3), (n-3, n), (n-2, n-2), (n, n)\}.\]

\item For $n\equiv 0 \mbox{ (mod 5)}$, we define
\[ A_{n,0}= \left((T(n)\cap f^{-1}(1)) \setminus \{(1, 5), (2, 2), (2, 7), (3, 4), (4, 6)\}\right)\bigcup\]
\[  \{(1, 1), (1, 4), (1, 7), (3, 3), (3, 6), (5, 5), (n, n)\}.\]
\end{itemize}

\begin{figure}[h]
\begin{center}
\begin{tabular}{ccc}
\includegraphics[width=.250\textwidth]{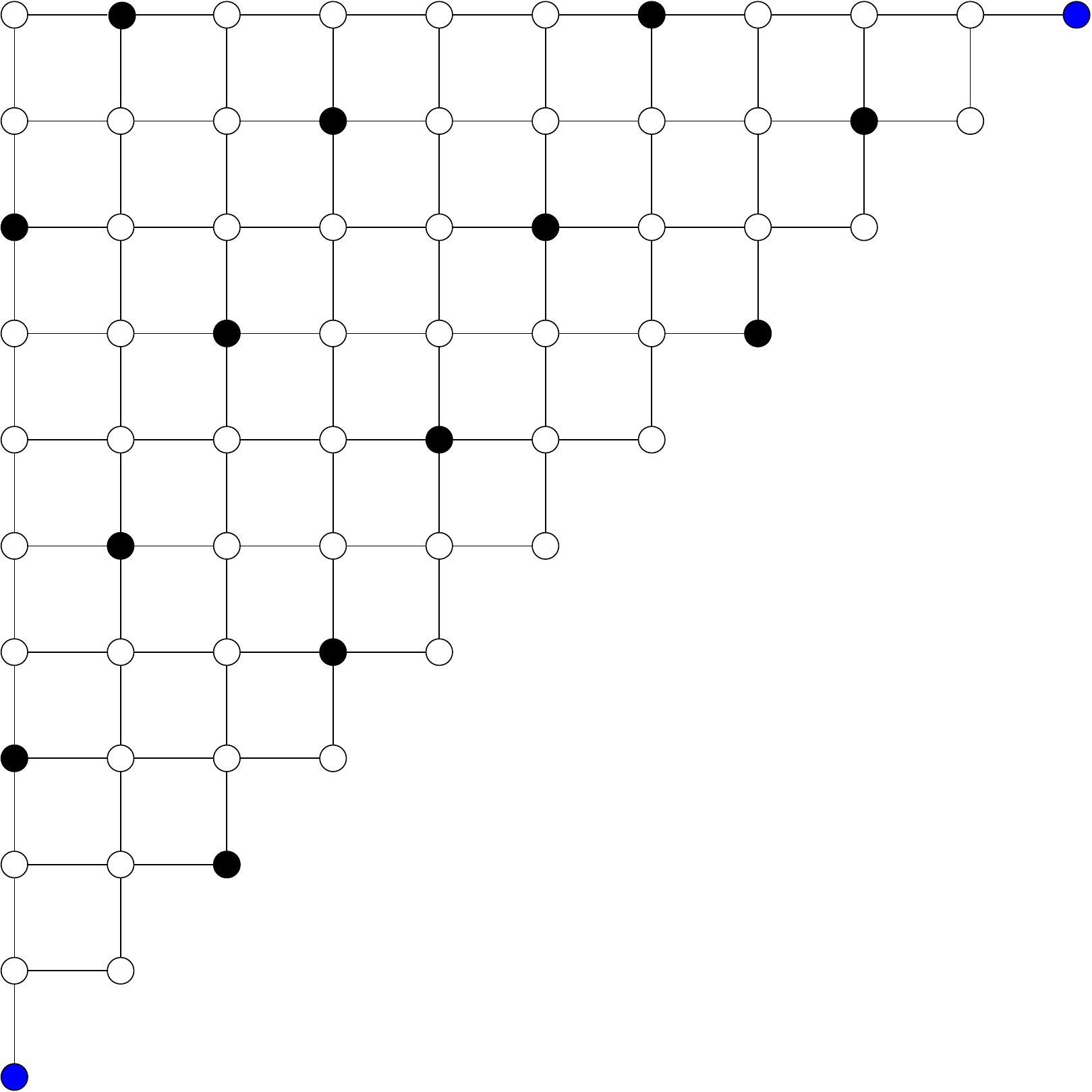}&\includegraphics[width=.250\textwidth]{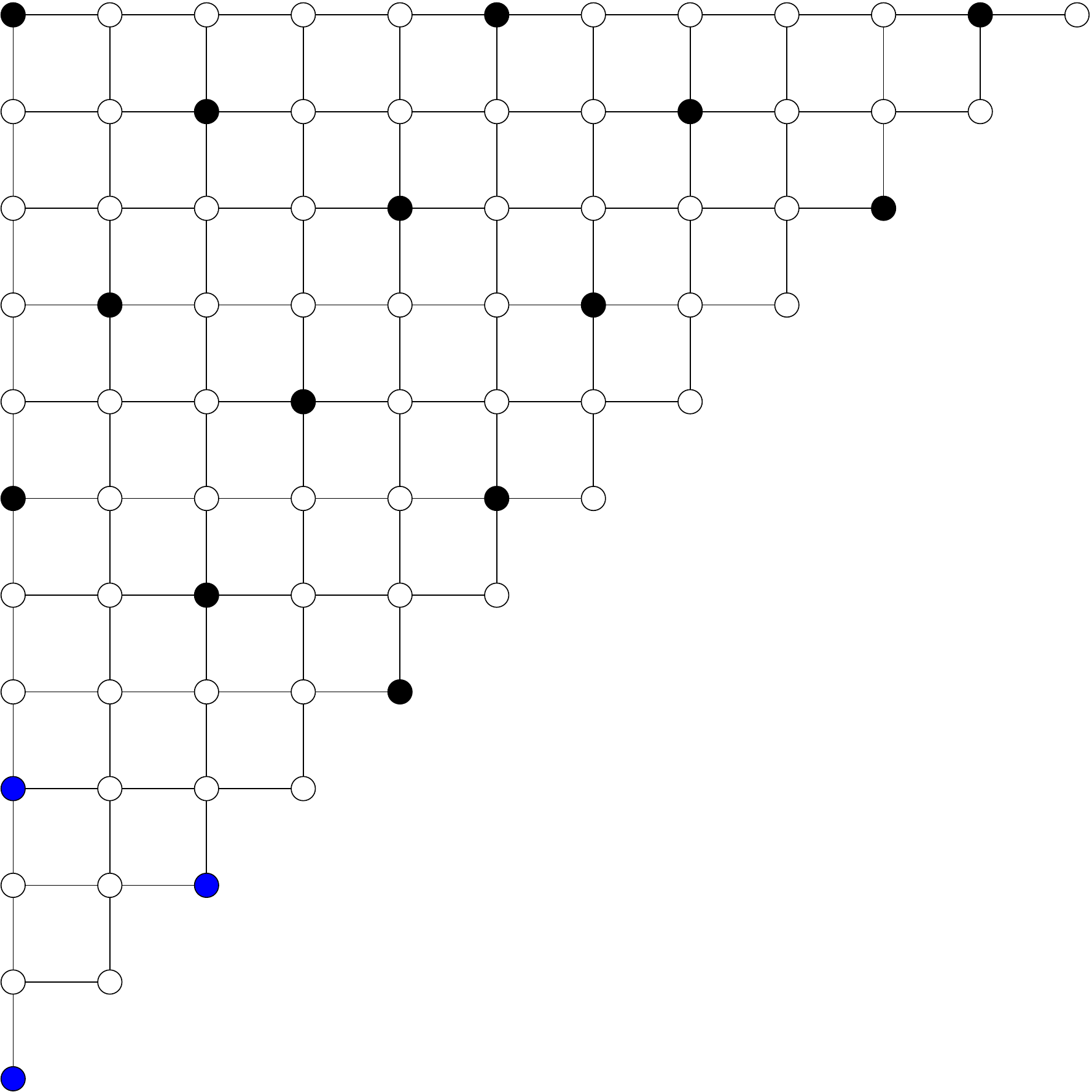}&\includegraphics[width=.250\textwidth]{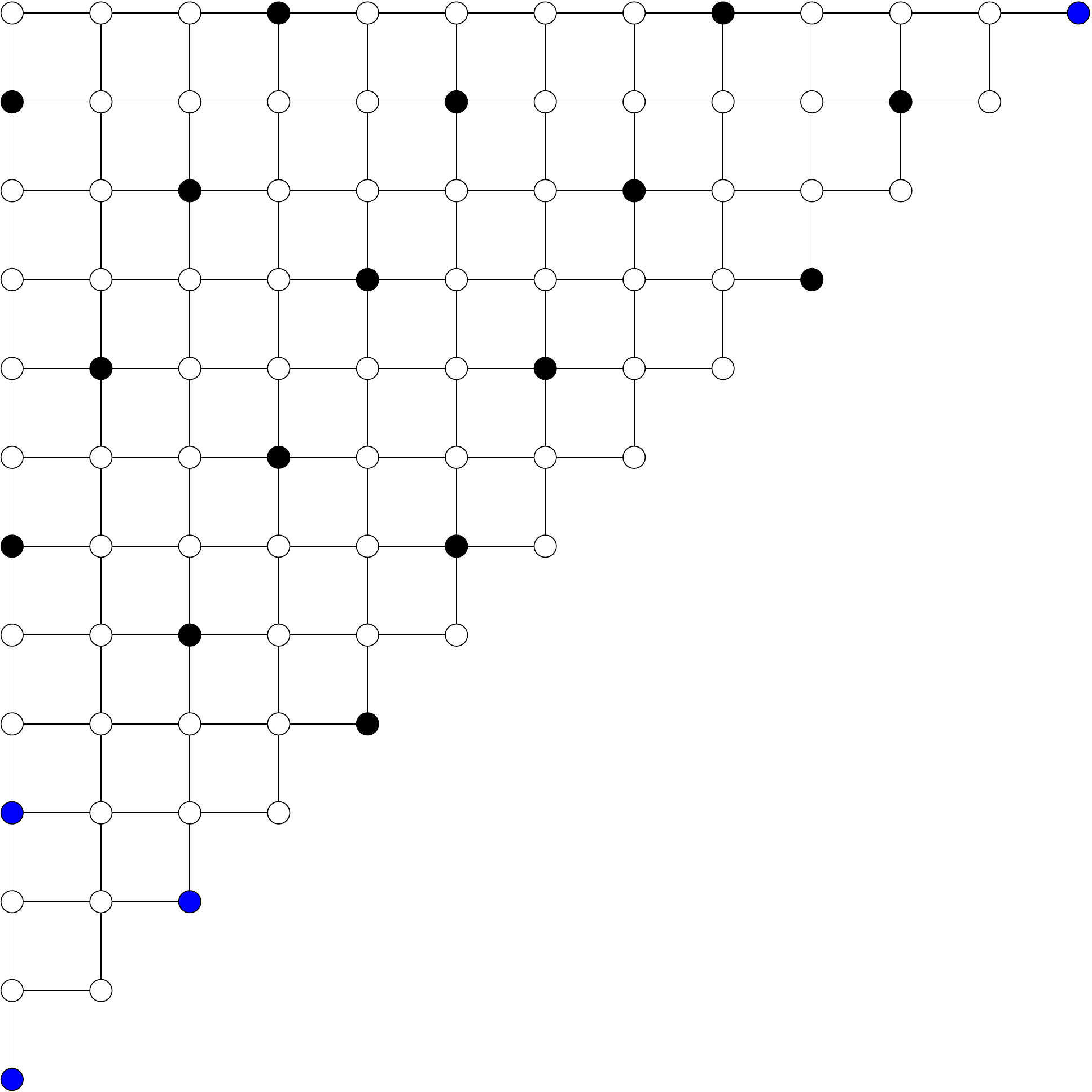}\\
(a)&(b)&(c)\\
\includegraphics[width=.250\textwidth]{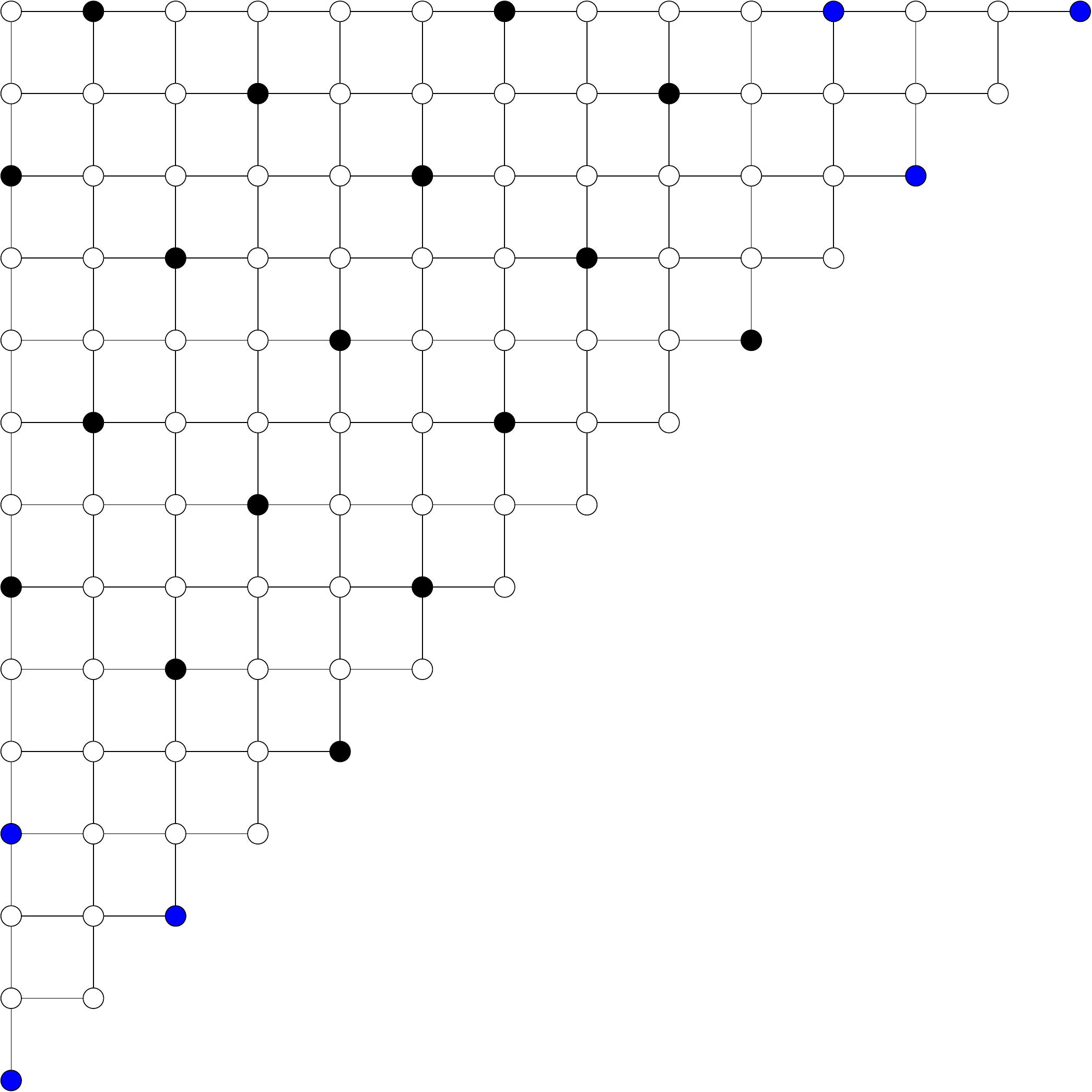}&\includegraphics[width=.250\textwidth]{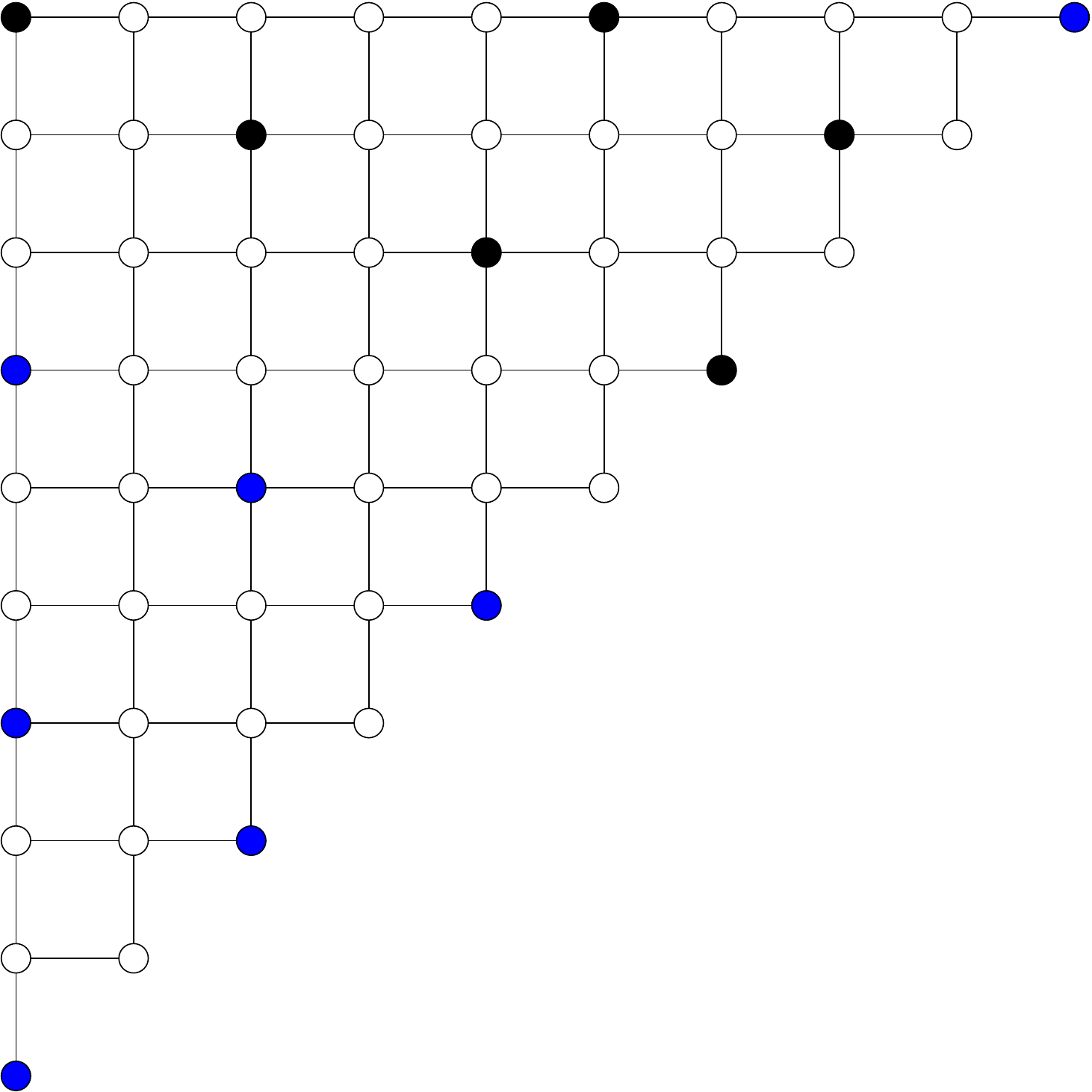}\\
(d)&(e)\\
\end{tabular}
\caption{\small Packing sets $A_{n,t}$ for $n=11, 12, 13, 14$ and $10$ are showed in figures (a), (b), (c), (d) and (e), respectively.}
\label{fig:construction}
\end{center}
\end{figure}

Using Proposition~\ref{prop:H} (ii), it is easy to check that $A_{n, t}$ is a packing set for $T(n)$ whenever $t = n \bmod{5}$. Let $j \in \{5,\dots,n\}$. From the definitions of $A_{n,t}, T^{j-4,j}(n)$ and  Proposition \ref{values-franja} it follows that:

\[
|A_{n,1}|=2+\sum_{i=1}^{(n-1)/5}|T^{(5i+1)-4,5i+1}(n)\cap f^{-1}(4)|=2+\sum_{i=1}^{(n-1)/5}(5i-1)=\frac{1}{10}(n^2+n+18).
\]

\[
|A_{n,2}|=2 + \sum_{i=1}^{(n-2)/5}|T^{(5i+2)-4, 5i+2}(n)\cap f^{-1}(0)|=2+\sum_{i=1}^{(n-2)/5}5i=\frac{1}{10}(n^2+n+14).
\]

\[
|A_{n,3}|=3 + \sum_{i=1}^{(n-3)/5}|T^{(5i+3)-4, 5i+3}(n)\cap f^{-1}(0)|=3+\sum_{i=1}^{(n-3)/5}(5i+1)=\frac{1}{10}(n^2+n+18).
\]

\[
|A_{n,4}|=4+\sum_{i=1}^{(n-4)/5}|T^{(5i+4)-4, 5i+4}(n)\cap f^{-1}(0)|=4+\sum_{i=1}^{(n-4)/5}(5i+2)=\frac{1}{10}(n^2+n+20).
\]

\[
|A_{n,0}|=5+\sum_{i=2}^{n/5}|T^{5i-4,5i}(n)\cap f^{-1}(1)|=5+\sum_{i=2}^{n/5}(5i-2)=\frac{1}{10}(n^2+n+20).
\]
 $\square$
%%%%%%%%%%%%%%%%%%%%%%%%%%%%%%%%%%%%%%%%%%%%%%%%%%%%%%%%%%%%%%%%%
%%%%%%%%%%%%%%%%%%%%%%%%%%%%%%%%%%%%%%%%%%%%%%%%%%%%%%%%%%%%%%%%%
%%%%%%%%%%%%%%%%%%%%%%%%%%%%%%%%%%%%%%%%%%%%%%%%%%%%%%%%%%%%%%%%%
%%%%%%%%%%%%%%%%%%%%%%%%%%%%%%%%%%%%%%%%%%%%%%%%%%%%%%%%%%%%%%%%%
%%%%%%%%%%%%%%%%%%%%%%%%%%%%%%%%%%%%%%%%%%%%%%%%%%%%%%%%%%%%%%%%%
%%%%%%%%%%%%%%%%%%%%%%%%%%%%%%%%%%%%%%%%%%%%%%%%%%%%%%%%%%%%%%%%%
%%%%%%%%%%%%%%%%%%%%%%%%%%%%%%%%%%%%%%%
%%%%%%%%%%%%%%%%%%%%%%%%%%%%%%%%%%%%%%%%%%%%%%%%%%%%%%%%%%%%%%%%%
%%%%%%%%%%%%%%%%%%%%%%%%%%%%%%%%%%%%%%%%%%%%%%%%%%%%%%%%%%%%%%%%%
%%%%%%%%%%%%%%%%%%%%%%%%%%%%%%%%%%%%%%%%%%%%%%%%%%%%%%%%%%%%%%%%%
%%%%%%%%%%%%%%%%%%%%%%%%%%%%%%%%%%%%%%%%%%%%%%%%%%%%%%%%%%%%
%%%%%%%%%%%%%%%%%%%%%%%%%%%%%%%%%%%%%%%%%%%%%%%%%%%%%%%%%%%%
%%%%%%%%%%%%%%%%%%%%%%%%%%%%%%%%%%%%%%%%%%%%%%%%%%%%%%%%%%%%
%%%%%%%%%%%%%%%%%%%%%%%%%%%%%%%%%%%%%%%%%%%%%%%%%%%%%%%%%%%%
%%%%%%%%%%%%%%%%%%%%%%%%%%%%%%%%%%%%%%%%%%%%%%%%%%%%%%%%%%%%
%%%%%%%%%%%%%%%%%%%%%%%%%%%%%%%%%%%%%%%%%%%%%%%%%%%%%%%%%%%%
  
\section{The sequence $\rho(T(n))$ is upper bounded by $a(n)$}
We recall that for  $n \in \{1, \dots, 49\}$, the exact value of $\rho(T(n))$ is given by the sequence  A085680$(n+1)$ in OEIS \cite{oeis}, and that such values of $A085680(n+1)$ coincide with $a(n)$ for $n\geq 6$. Our aim in this section is to show Lemma \ref{lemma:upper} by proving that $\rho(T(n))$ is upper bounded by $a(n)$, and so confirming that the generating function of $A085680(n+1)$ proposed by Rob Pratt \cite{oeis} is right. 

\subsection{Auxiliar results}
 In this subsection we recall some known results about the packing number of graphs and we prove several lemmas. 
  All these results will be used in the proof of Lemma \ref{lemma:upper}. 
 
As usual, for $p$ and $q$ positive integers we will use $G_{p,q}$ to denote the {\em grid graph} of size $p\times q$, with $p \leq q$. The exact values of $\rho(G_{p,q})$ were determined by  D. C. Fisher~\cite{fisher} and are as follows:  

\begin{equation}\label{eq:Gpq}
\rho(G_{p,q})= \left\{ \begin{array}{lcc}
             \lceil \frac{(p+1)q}{6}\rceil &   if & p\in\lbrace 1,2,3\rbrace, \\
             \\ \lceil \frac{6q}{7}\rceil &  if & p=4\text{ and }q\not\equiv  1\pmod 7, \\
             \\ \lceil \frac{6q}{7}\rceil+1 &  if & p=4\text{ and }q\equiv  1\pmod 7, \\
	   \\ 10 & if & (p,q)=(7,7),\\
             \\ \lceil \frac{pq+2}{5}\rceil &  if & p\in\lbrace 5,6,7\rbrace\text{ and }(p,q)\neq (7,7), \\
	  \\ 17 & if & (p,q)=(8,10),\\
	  \\ \lceil \frac{pq}{5}\rceil &  if & p\geq 8 \text{ and } (p,q)\neq (8,10).
            \end{array}
   \right.
\end{equation}

As we shall see, this result on $\rho(G_{p,q})$ will be used in most of the proofs given in the rest of the paper.  

Our next observation follows directly from the definition of the packing number of a graph, and is one of our main tools in this section. Indeed, a strategy used in many proofs of this section is as follows. We partition the vertex set of the graph $G$ under consideration into two or more subsets, and then we obtain upper bounds for the packing number of $G$  by giving upper bounds for the packing numbers of the subgraphs induced by such subsets of $V(G)$. We usually use this strategy without explicit mention.  

\begin{obs}\label{obs1}
Let $\lbrace V_1,V_2\rbrace$ be a partition of $V(G)$. If $G_1$ and $G_2$ are the subgraphs of $G$ induced by $V_1$ and $V_2$, respectively, then 
\[
\rho(G)\leq \rho(G_1)+\rho(G_2).
\]
\end{obs}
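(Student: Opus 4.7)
The plan is to take a maximum packing set $S$ of $G$ (so $|S|=\rho(G)$), split it along the partition as $S_1 := S\cap V_1$ and $S_2 := S\cap V_2$, and show that $S_i$ is a packing set of $G_i$ for $i\in\{1,2\}$. Once this is established, the inequality is immediate from $|S| = |S_1|+|S_2| \leq \rho(G_1)+\rho(G_2)$.

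The only substantive step is verifying that $S_i$ is indeed a packing set of $G_i$. First I would fix distinct $u,v\in S_i$ and aim to show $d_{G_i}(u,v)\geq 3$. Since $G_i$ is an induced subgraph of $G$, every $u$-$v$ walk in $G_i$ is also a $u$-$v$ walk in $G$, so $d_{G_i}(u,v)\geq d_G(u,v)$ (with the convention $d_{G_i}(u,v)=\infty$ if $u$ and $v$ lie in different components of $G_i$, which is harmless here). Because $S$ is a packing set of $G$, we have $d_G(u,v)\geq 3$, and combining the two inequalities gives $d_{G_i}(u,v)\geq 3$, as required.

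Putting the pieces together, $|S_i|\leq \rho(G_i)$ for $i=1,2$, and since $\{V_1,V_2\}$ is a partition of $V(G)$ the sets $S_1$ and $S_2$ partition $S$, so
\[
\rho(G) \;=\; |S| \;=\; |S_1|+|S_2| \;\leq\; \rho(G_1)+\rho(G_2).
\]
There is essentially no obstacle in this argument; the only point that deserves a brief mention is the monotonicity $d_{G_i}\geq d_G$ for induced subgraphs, which is what makes restricting a packing set to an induced subgraph produce a packing set again. The same argument extends verbatim to any finite partition of $V(G)$, which is convenient since later sections will invoke this observation for partitions into more than two parts.
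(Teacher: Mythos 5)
Your proof is correct and is exactly the argument the paper has in mind: the paper states this observation without proof, remarking only that it ``follows directly from the definition of the packing number,'' and your write-up simply makes explicit the standard restriction argument (restricting a maximum packing set to each part and using that distances do not decrease when passing to an induced subgraph). No issues.
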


The following corollary is a consequence of  Observation \ref{obs1} and the facts that $\{V(T(n-5)), V(T^5(n))\}$ and $\{V(T(n-10)), V(T^{10}(n))\}$ are partitions of $V(T(n))$ for $n>10$. 

\begin{cor}\label{cota-superior-arriba}
Let $n>10$ be an integer. Then $\rho(T(n))\leq \rho(T(n-5))+\rho(T^5(n))$ and $\rho(T(n))\leq \rho(T(n-10))+\rho(T^{10}(n))$.
\end{cor}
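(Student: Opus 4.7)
The plan is to observe that the corollary is an essentially immediate consequence of Observation \ref{obs1}, so the only real content is verifying that the two claimed partitions of $V(T(n))$ really are partitions and that the induced subgraphs on the parts are exactly the graphs named in the statement.

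First I would unpack the definitions. Recall $V(T(n))=\{(i,j):1\le i,j\le n,\ i\le j\}$, $V(T^{5}(n))=\{(i,j)\in V(T(n)):n-4\le j\le n\}$, and $V(T(n-5))=\{(i,j):1\le i,j\le n-5,\ i\le j\}$. Since for any $(i,j)\in V(T(n))$ we have $i\le j$, the condition $j\le n-5$ forces $i\le n-5$ as well, so $\{(i,j)\in V(T(n)):j\le n-5\}$ coincides exactly with $V(T(n-5))$. On the other hand $\{(i,j)\in V(T(n)):j\ge n-4\}=V(T^{5}(n))$. The two conditions $j\le n-5$ and $j\ge n-4$ partition the integers, so $\{V(T(n-5)),V(T^{5}(n))\}$ is a partition of $V(T(n))$. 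Because $T(n)$, $T(n-5)$ and $T^{5}(n)$ are all defined as induced subgraphs of the integer grid $H$, the subgraphs of $T(n)$ induced by $V(T(n-5))$ and $V(T^{5}(n))$ are precisely $T(n-5)$ and $T^{5}(n)$.

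Next I would apply Observation \ref{obs1} with $G=T(n)$, $G_{1}=T(n-5)$ and $G_{2}=T^{5}(n)$, to conclude
\[
\rho(T(n))\;\le\;\rho(T(n-5))+\rho(T^{5}(n)).
\]
The second inequality is handled by exactly the same argument, with $5$ replaced by $10$: the splitting $j\le n-10$ versus $j\ge n-9$ partitions $V(T(n))$ into $V(T(n-10))$ and $V(T^{10}(n))$ (the condition $n>10$ is what guarantees that $V(T(n-10))$ is nonempty and that the definition $T^{10}(n)$ makes sense, since it requires $n-10\ge 0$), and Observation \ref{obs1} gives
\[
\rho(T(n))\;\le\;\rho(T(n-10))+\rho(T^{10}(n)).
\]

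There is no real obstacle here; the only point that requires a moment's attention is checking that the condition $i\le j$ in the definition of $V(T(n))$ is automatically inherited by the ``lower'' part of each partition, which is why one does not have to separately argue that some triangular corner is missing.
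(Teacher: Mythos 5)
Your proposal is correct and follows exactly the route the paper intends: the paper derives the corollary from Observation \ref{obs1} together with the fact that $\{V(T(n-5)), V(T^{5}(n))\}$ and $\{V(T(n-10)), V(T^{10}(n))\}$ partition $V(T(n))$, which is precisely your argument. Your verification that the constraint $i\le j$ makes the ``lower'' part coincide with $V(T(n-5))$ (resp.\ $V(T(n-10))$) is the only detail the paper leaves implicit, and you handle it correctly.
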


\begin{lem}\label{maximofranja10}
Let $m$ and $n$ be integers. Then,
\begin{itemize}
\item[(i)] $\rho (T^5(m))=m-1$, for $m\geq 5$;
\item[(ii)] $\rho(T^{10}(n))=2n-8$, for $n\geq 12$. 
 \end{itemize}
\end{lem}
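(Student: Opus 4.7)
The plan is to handle the lower bounds by explicit construction and the upper bounds by an inductive argument with a boundary-interaction step. For the lower bound in~(i), I take $S := \{(m,m)\} \cup (f^{-1}(\ell) \cap T^5(m))$ with $\ell := (3m+1) \bmod 5$. Proposition~\ref{values-franja} (applied with $i=m$ there, and color~$\ell$) gives $|f^{-1}(\ell) \cap T^5(m)| = m-2$, and the vertices of $T^5(m)$ within graph distance $2$ of $(m,m)$ are exactly $(m,m)$, $(m-1,m)$, $(m-2,m)$ and $(m-1,m-1)$, whose $f$-values are $3m$, $3m-1$, $3m-2$, $3m-3$ (mod $5$). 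Since $\ell$ is the one residue in $\Z_5$ avoiding all four, $S$ is a packing of size $m-1$. For~(ii), the same idea with $\ell := (3n+1) \bmod 5$ works on $T^{10}(n)$: using the partition $T^{10}(n) = T^{n-9,n-5}(n) \cup T^{n-4,n}(n)$, Proposition~\ref{values-franja} gives $|f^{-1}(\ell) \cap T^{10}(n)| = (n-7)+(n-2) = 2n-9$, and augmenting by $(n,n)$ produces a packing of size $2n-8$.

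For the upper bound in~(i), I would proceed by induction on $m$, taking $m \in \{5,\ldots,10\}$ as bases (verified against the tabulated values of A085680). In the inductive step ($m \geq 11$), decompose $T^5(m) = L \cup R$, where $L := T_{1,5}^{5}(m)$ is a $5 \times 5$ block and $R := T_{6,m}^{5}(m)$. The translation $(x,y) \mapsto (x-5,y-5)$ identifies $R$ with $T^5(m-5)$, so the induction hypothesis gives $\rho(R) \leq m-6$, and Fisher's formula gives $\rho(L) \leq \rho(G_{5,5}) = 6$. The naive sum is $m$, which is one too many. If $|S \cap L| \leq 5$ the bound already holds, so the plan is to rule out simultaneous $|S \cap L| = 6$ and $|S \cap R| = m-6$. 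To this end I would classify, up to the symmetries of $G_{5,5}$, the $6$-element packings of this block and show that each one occupies at least one vertex in the rightmost column $x = 5$ of $L$; such a vertex forbids at least one vertex of $R$ required by every max packing of $R$, forcing $|S \cap R| \leq m-7$.

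For~(ii), split horizontally $T^{10}(n) = T^5(n) \cup T^5(n-5)$; Observation~\ref{obs1} and part~(i) give $\rho(T^{10}(n)) \leq (n-1)+(n-6) = 2n-7$, again one too many. I would repeat the boundary-interaction analysis at the seam between rows $y = n-4$ and $y = n-5$: any packing of $T^5(n)$ of size $n-1$ must use at least one vertex in its bottom row $y=n-4$, and such a vertex blocks some vertex of $T^5(n-5)$ in its top row $y = n-5$ that a max packing of $T^5(n-5)$ of size $n-6$ would otherwise need, reducing the latter's contribution by one.

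The main obstacle in both parts is turning the heuristic ``the boundary between the two pieces costs at least one packing vertex'' into a rigorous case analysis. Concretely, I expect the hardest step to be the explicit classification, up to symmetry, of the extremal packings of $G_{5,5}$ (for~(i)) and of $T^5(m)$ (for~(ii)), together with a careful verification that each extremal configuration does indeed force the claimed loss on the other side of the partition.
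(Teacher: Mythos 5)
Your lower-bound constructions in both parts are essentially the paper's: the same chromatic class $f^{-1}\bigl((3r-4)\bmod 5\bigr)=f^{-1}\bigl((3r+1)\bmod 5\bigr)$ augmented by the corner $(r,r)$, counted via Proposition \ref{values-franja}. That half is correct.

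The upper bounds are where the proposal has a genuine gap. In both (i) and (ii) your argument reduces to a claim of the shape: every extremal packing of the piece on one side of the seam contains a vertex that kills a vertex which \emph{every} extremal packing of the other piece needs. The first half of this is fine for (i) --- a $6$-packing of $G_{5,5}$ avoiding column $5$ would be a $6$-packing of $G_{4,5}$, contradicting $\rho(G_{4,5})=5$ from Equation (\ref{eq:Gpq}) --- but the second half is not a valid deduction: maximum packings of $T^5(m-5)$ (respectively of the strip $\simeq T^5(n-5)$ in (ii)) are far from unique, and there is no single vertex, nor small set of vertices near the seam, that all of them are forced to contain. What you actually need is that any packing of $T^5(k)$ avoiding the handful of vertices excluded by the seam vertex has size at most $k-2$; that is a strengthened induction hypothesis which your induction does not carry, and establishing it is essentially the entire difficulty, not a verification to be deferred. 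The paper sidesteps this with different decompositions: for (i) it peels off one column ($\simeq P_5$) at a time and runs a case analysis on the number of packing vertices in columns $1$, $2$, $3$, invoking the induction hypothesis at several offsets; for (ii) it splits $T^{10}(n)$ into a genuine grid $G_{10,n-10}$, where Fisher's formula gives the exact bound $2(n-10)$ with no loss at all, plus the fixed finite piece $T(10)$, and then performs the boundary-interaction analysis rigorously using a computer classification of the four extremal $13$-packings of $T(10)$ (Figure \ref{fig:unicosT10}), together with computer verification of the cases $12\le n\le 18$. So the target and the heuristic are right, but the step you yourself flag as hardest rests on an unjustified (and, as literally stated, false) uniqueness assumption about extremal packings, and without replacing it by either a strengthened inductive statement or an explicit classification of the extremal configurations the proof does not go through.
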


\begin{proof}
Let $m$ and $n$ be as in the statement of the lemma. 
\vskip 0.4cm
\noindent({\bf A}) First we show that $\rho (T^5(m))\geq m-1$ and that $\rho (T^{10}(n))\geq 2n-8$.  Let $r\in \{m,n\}$.
If $r=m$ we let $\ell=5$, and if $r=n$ then we let $\ell=10$. It is enough to exhibit a packing set for $T^{\ell}(r)$ with the required size. Consider  the homomorphism $f \colon \Z^2\to \Z_5$ defined in Section \ref{notacion}, namely $f(x,y)=(x+2y) \bmod{ 5 }$. Let $c=(3r-4)\bmod {5}$. Let $S_r:=T^{\ell}(r)\cap f^{-1}(c)$. From Proposition \ref{prop:H} (iii), we know that $S_r$ is a packing set of $T^{\ell}(r)$. Moreover, since $(r,r)\in T^{\ell}(r)\setminus f^{-1}(c)$, and no element in $\{(r-1,r), (r-2,r),(r-1,r-1)\}$ belongs to $S_r$, then $S_r\cup \{(r,r)\}$ is a packing set for $T^{\ell}(r)$.  
 As $S_m=T^5(m)\cap f^{-1}(c)$  and $S_n=(T^5(n)\cap f^{-1}(c))\cup (T^{n-9,n-5}_{1,n-5}(n)\cap f^{-1}(c))$, by Proposition \ref{values-franja} we have that 
 $|S_m|=m-2$ and $|S_n|=(n-2)+((n-5)-2)$. Then $S_r\cup \{(r,r)\}$ is the required packing set.

\vskip 0.4cm

\noindent({\bf B}) Now we show that $\rho (T^5(m))=m-1$.  In view of (A), it is enough to show that $\rho (T^5(m))\leq m-1$.  We proceed by induction on $m$.
The cases $m = 5,6,7$ are easy to verify. Now we make the inductive assumption that the statement holds for any $k \in \{5,6,\ldots,m -1\}$, and show that it holds for 
$m\geq 8$.  As $T^5_{i,m}(m)\simeq T^5(m-i+1)$ for $i=2,3, \ldots , m-4$, by induction we have that $\rho(T^5_{i,m}(m))=m-i$. Let $S$ be a maximum packing set of $T^5(m)$ and let $S':=S^5_{2,m}(m)$. Because $S'$ is a packing set for $T^5_{2,m}(m)$ and $\rho(T^5_{2,m}(m))=m-2$, then $|S'|\leq m-2$. Since the sets of vertices $V(T^5_1(m))$ and $V(T^5_{2,m}(m))$ form a partition of the vertex set of $T^5(m)$, and $T^5_1(m)\simeq P_5$, then $|S|\leq |S'|+\rho(P_5)= |S'|+2$. We may assume that $|S'|=m-2$, as otherwise we are done. For $j\in\{1,2,3\}$, let $s_j$ be the number of vertices in $S^5_j(m)$. Clearly, $s_j\in \{0,1,2\}$. 

\begin{itemize}
\item [1.] If $s_2=0$,  then the assumption $|S'|=m-2$ implies that $|S^5_{3,m}(m)|=m-2$, contradicting that $\rho(T^5_{3,m}(m))=m-3$.
\item [2.] If $s_2=1$ we have two cases.
\begin{itemize}
\item [2.1] If $s_3\geq 1$, it is an easy exercise to check that $s_1$ must be at most $1$, and hence  $|S|=s_1+|S'|\leq m-1$.
\item [2.2] If $s_3=0,$ then $|S^5_{4,m}(m)|=|S'|-s_2-s_3=m-3$, contradicting that $\rho(T^5_{4,m}(m))=m-4$.
\end{itemize}
\item [3.] If $s_2=2$, then $s_1\leq 1$, and hence $|S|=s_1+|S'|\leq m-1.$
\end{itemize}

\noindent({\bf C}) Now we show that $\rho (T^{10}(n))=2n-8$. Again, in view of (A), it is enough to show that $\rho (T^{10}(n))\leq 2n-8$. 
We have verified by computer that  $\rho(T^{10}(n))\leq 2n-8$ holds for $n\in \{12,\ldots ,18\}$. Thus we will assume that $n\geq 19$. 

Let $S$ be a maximum packing set of $T^{10}(n)$. Note that if $|S^{10}_{1,n-10}(n)|\leq 2(n-10)-1$ or $|S^{10}_{n-9,n}(n)|\leq 13-1$, then Observation \ref{obs1} implies the required inequality: 
\[
|S|=|S^{10}_{1,n-10}(n)|+|S^{10}_{n-9,n}(n)|\leq 2(n-10) +13 - 1= 2n - 8.
\]

Thus we can assume that $|S^{10}_{1,n-10}(n)|\geq 2(n-10)$ and $|S^{10}_{n-9,n}(n)|\geq 13$. Moreover, since $T^{10}_{n-9,n}(n)\simeq T(10)$, then $|S^{10}_{n-9,n}(n)|\leq \rho(T(10))=13$ by A085680 in \cite{oeis}. Similarly, since $T^{10}_{1,n-10}(n)\simeq G_{10,n-10}$, then $|S^{10}_{1,n-10}(n)|\leq 2(n-10)$ by  Equation (\ref{eq:Gpq}). Therefore we can assume that $|S^{10}_{1,n-10}(n)|=2(n-10)$ and $|S^{10}_{n-9,n}(n)|=13$. 

By computer we verified that (up to isomorphism) the four packing sets of $T^{10}_{n-9,n}(n)$ given in Figure \ref{fig:unicosT10} are the only sets attaining the maximum size, namely $13$. 
 
 %%%%%%% 
 \begin{figure}[H]
\begin{center}
\includegraphics[width=.9\textwidth]{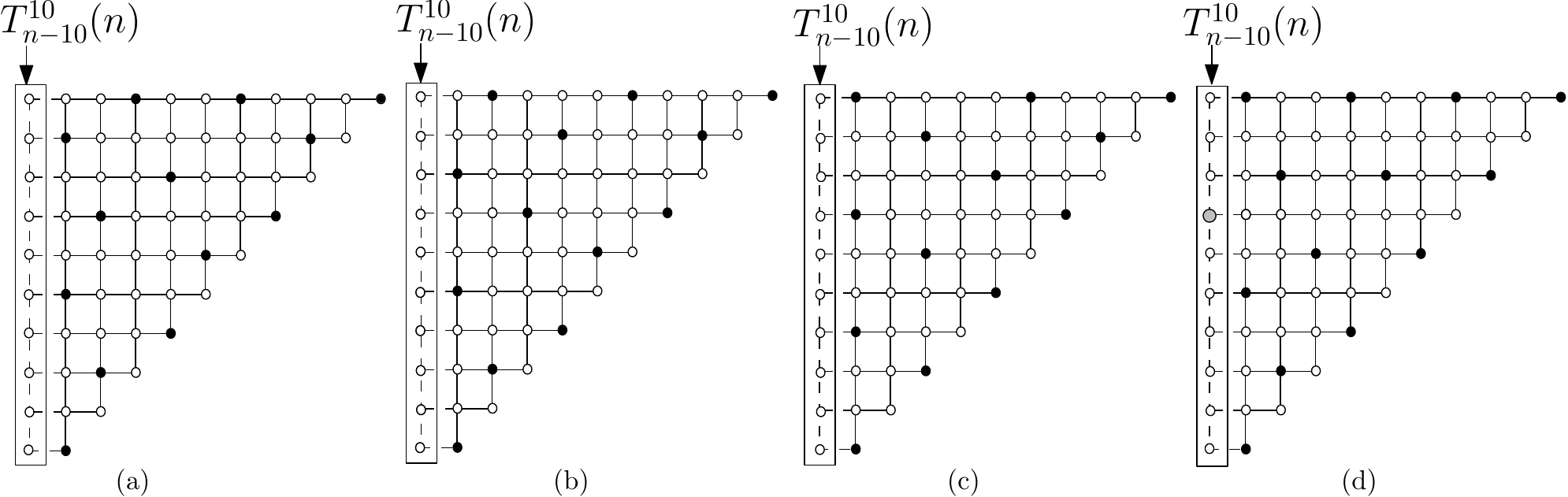}
\caption{\small These are (up to isomorphism) all the packing sets of $T_{10}$ with $13$ vertices.}
\label{fig:unicosT10}
\end{center}
\end{figure}
 
Observe that the packing sets in Figures \ref{fig:unicosT10} (a), (b) and (c) do not allow vertices of $S$ in $T^{10}_{n-10}(n)$ (i.e., in the vertical dotted lines).
Similarly, note that the packing set in Figure  \ref{fig:unicosT10} (d) allows at most 1 vertex of $S$ in $T^{10}_{n-10}(n)$, namely the gray vertex whose coordinates are $(n-10, n-3)$. Therefore we have that  $|S^{10}_{n-10}(n)|\leq 1$.

First, we suppose that $n=19$. From our assumption, we know that $|S_{1,9}^{10}(19)|=18$. If $|S^{10}_{9}(19)|=0$, then $|S^{10}_{1, 8}(19)|=18$ which contradicts $\rho(T^{10}_{1, 8}(19))=17$ (Equation (\ref{eq:Gpq})). Now, suppose that $|S^{10}_{9}(19)|=1$. Then $|S^{10}_{1, 8}(19)|=17$ and $S$ is as in Figure \ref{fig:unicosT10} (d).
 The last assertion implies that $|S^{10}_{8}(19)|\leq 2$.  On the other hand, we have verified by computer that $|S^{10}_{1, 8}(19)|=17$ implies $|S^{10}_{8}(19)|\geq 3$, a contradiction.    

Now we suppose that $n\geq 20$. Then $|S_{1,n-10}^{10}(n)|=2(n-10)$. Since $|S^{10}_{n-10}(n)|\leq 1$, then $|S^{10}_{1,n-11}(n)|\geq 2(n-10)-1,$ which contradicts that $\rho(T^{10}_{1,n-11}(n))=2(n-10)-2$ whenever $n-11\geq 9$ (Equation (\ref{eq:Gpq})). 

\end{proof}

The value $\rho(T^{10}(11))=15$ was determined by computer and it does not satisfy the regularity stated in Lemma \ref{maximofranja10} (ii). 

\begin{cor}\label{U5menormax}
Let $n\geq 5$ be an integer and let $S$ be a packing set of $T^5(n)$. If $|S^5_{1,i}(n)|\leq i-1$ for some $i\in \{1, \dots, n-4\}$, then $|S|<n-1$.
\end{cor}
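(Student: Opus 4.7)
The plan is to split $T^{5}(n)$ along the vertical line $x=i$ into the left block $T^{5}_{1,i}(n)$ and the right block $T^{5}_{i+1,n}(n)$, and then apply Observation \ref{obs1}. Combined with the hypothesis $|S^{5}_{1,i}(n)|\le i-1$, this yields
\[
|S|\;\le\;|S^{5}_{1,i}(n)|+\rho\bigl(T^{5}_{i+1,n}(n)\bigr)\;\le\;(i-1)+\rho\bigl(T^{5}_{i+1,n}(n)\bigr),
\]
so it will suffice to prove $\rho(T^{5}_{i+1,n}(n))\le n-i-1$, because then $|S|\le n-2<n-1$.

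For the generic range $1\le i\le n-5$, I would observe that the translation $(x,y)\mapsto(x-i,y-i)$ is a graph isomorphism from $T^{5}_{i+1,n}(n)$ onto $T^{5}(n-i)$: the $x$-range $[i+1,n]$ shifts to $[1,n-i]$, the $y$-range $[n-4,n]$ shifts to $[n-i-4,n-i]$, and the diagonal constraint $x\le y$ is preserved. Since $n-i\ge 5$, Lemma \ref{maximofranja10}(i) applies and gives $\rho(T^{5}_{i+1,n}(n))=n-i-1$, the exact bound needed.

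The boundary value $i=n-4$ has to be handled separately, because in that case $T^{5}(n-i)=T^{5}(4)$ lies outside the scope of Lemma \ref{maximofranja10}(i). Here the right block is $T^{5}_{n-3,n}(n)$. Since $x\ge n-3>n-4$ forces $y\ge n-3$, this block is the triangular region $\{(x,y):n-3\le x\le y\le n\}$, which has $4+3+2+1=10$ vertices. Translating by $-(n-4)$ in both coordinates shows $T^{5}_{n-3,n}(n)\simeq T(4)\simeq F_{2}(P_{5})$, and the entry A085680$(5)=3$ from Table \ref{tablaan} gives $\rho(T^{5}_{n-3,n}(n))=3=(n-i)-1$, closing the argument.

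I do not anticipate a serious obstacle: the proof is essentially the partition argument of Observation \ref{obs1} powered by Lemma \ref{maximofranja10}(i). The only subtlety is noting that $i=n-4$ lies outside the scope of that lemma and must be dispatched by identifying the triangular corner with $T(4)$ and reading its packing number directly from the known small values of A085680.
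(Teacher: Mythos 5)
Your proof is correct and follows essentially the same route as the paper: partition $T^5(n)$ at column $i$, identify $T^5_{i+1,n}(n)$ with $T^5(n-i)$ by translation, and invoke Lemma \ref{maximofranja10}(i) to get $|S|\le (i-1)+(n-i-1)=n-2$. You are in fact slightly more careful than the paper, whose one-line proof applies that lemma uniformly even when $i=n-4$ (so $n-i=4$, outside the lemma's stated range $m\ge 5$); your separate treatment of that corner block as $T(4)$ with $\rho(T(4))=3$ closes a small gap the paper leaves implicit.
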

\begin{proof}
Note that $|S|=|S^5_{1,i}(n)|+|S^5_{i+1,n}(n)|$ and that $T^5_{i+1,n}(n)\simeq T^5 (n-i)$. Then from Lemma~\ref{maximofranja10} (i) we have that $|S^5_{i+1,n}(n)|\leq (n-i)-1$, and so  $|S|\leq i-1+(n-i)-1=n-2 < n-1$, as desired.
\end{proof}

\begin{cor}\label{lemaparticion}
Let $n\geq 27$ and let $S$ be a packing set of $T^{10}(n)$. Let $i\in \{0,1, 2\}$ and $j+i \in\{12, \dots, n-12\}$. If $|S^{10}_{j,j+i}(n)|\leq 2i+1$, then $|S|< 2n-8$.
\end{cor}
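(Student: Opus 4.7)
The plan is to split $V(T^{10}(n))$ into three vertical strips and bound $|S|$ on each strip separately. I would set
\[
L := T^{10}_{1,j-1}(n), \quad M := T^{10}_{j,j+i}(n), \quad R := T^{10}_{j+i+1,n}(n),
\]
and apply Observation \ref{obs1} twice together with the hypothesis $|S_{j,j+i}^{10}(n)| \le 2i+1$ to obtain
\[
|S| \;\le\; \rho(L) + |S_{j,j+i}^{10}(n)| + \rho(R) \;\le\; \rho(L) + (2i+1) + \rho(R).
\]

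The next step is to identify $L$ and $R$ up to isomorphism so that the known packing-number formulas apply. For $R$, since $j+i \le n-12$, the integer $m := n-j-i$ satisfies $m \ge 12$, and a direct indexing check (shifting the $x$-coordinate by $j+i$, which sends the triangular tail $x \in \{n-8,\ldots,n\}$ of $R$ to the tail $x \in \{m-8,\ldots,m\}$ of $T^{10}(m)$) will confirm $R \simeq T^{10}(m)$; Lemma \ref{maximofranja10}(ii) then yields $\rho(R) = 2(n-j-i)-8$. For $L$, the inequality $j-1 \le n-13 < n-9$ means that every $x \in \{1,\ldots,j-1\}$ automatically satisfies $x \le y$ for all $y \in \{n-9,\ldots,n\}$, so $L \simeq G_{10,j-1}$. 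The hypotheses $j+i \ge 12$ and $i \le 2$ force $j \ge 10$, hence $j-1 \ge 9$, and Fisher's formula (Equation \eqref{eq:Gpq}), in the $p \ge 8$ branch and away from the exceptional cell $(p,q)=(8,10)$, gives $\rho(L) = \lceil 10(j-1)/5 \rceil = 2(j-1)$.

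Substituting these two bounds reduces the target inequality to a single arithmetic check:
\[
|S| \;\le\; 2(j-1) + (2i+1) + 2(n-j-i) - 8 \;=\; 2n - 9 \;<\; 2n - 8,
\]
which finishes the argument.

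The main obstacle, in my view, is the book-keeping that establishes $R \simeq T^{10}(n-j-i)$: one has to align the triangular tail of $R$ (the part with $x > n-9$) with that of $T^{10}(n-j-i)$ after a horizontal translation, and verify that the full-column/triangular-column counts coincide. The other sensitive point is checking that the range $j \ge 10$ really does keep $L$ inside the regime where Fisher's formula yields the clean value $2(j-1)$, so that the exceptional value $\rho(G_{8,10})=17$ never appears; once this is clear, the remainder is a one-line computation.
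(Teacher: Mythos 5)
Your proposal is correct and follows essentially the same route as the paper: the same three-strip decomposition $T^{10}_{1,j-1}(n)$, $T^{10}_{j,j+i}(n)$, $T^{10}_{j+i+1,n}(n)$, with the left strip bounded by $2(j-1)$ via Fisher's formula, the right by $2(n-j-i)-8$ via Lemma \ref{maximofranja10}(ii), and the middle by the hypothesis, summing to $2n-9$. The isomorphism and range checks you flag as the sensitive points are exactly the ones the paper treats as routine, and your verification of them is sound.
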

\begin{proof}
Clearly,  $|S|=|S^{10}_{1,j-1}(n)|+|S^{10}_{j,j+i}(n)|+|S^{10}_{j+i+1, n}(n)|$. From Equation (\ref{eq:Gpq}) we know that $|S^{10}_{1,j-1}(n)|\leq 2(j-1)$, and by Lemma \ref{maximofranja10} (ii) that
 $|S^{10}_{j+i+1,n}(n)|\leq 2(n-(j+i))-8.$ Therefore, $|S|\leq 2(j-1)+(2i+1)+2(n-(j+i))-8=2n-9 <2n-8$, as desired. 
\end{proof} 

The proof of our next proposition is a routine exercise. 
\begin{prop}\label{obsno32}
Let $n\geq 27$, $j\in \{1,\ldots , n-12\}$ and $k\in \{1,2\}$. If $S$ is a packing set of $T^{5k}(n)$ and  two of $|S^{5k}_j(n)|, |S^{5k}_{j+1}(n)|,|S^{5k}_{j+2}(n)|$ are equal to $k$, then the other one is at most $k$. 
\end{prop}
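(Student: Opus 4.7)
The plan is to reduce the claim to a distance-$3$ packing problem on a $3\times 5k$ subgrid of the integer grid $H$, and then verify it by a short case analysis.

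First I would observe that, since $j+2\le n-10\le n-5k+1$, each column $C_i := T^{5k}_i(n)$ for $i\in\{j,j+1,j+2\}$ contains all $5k$ positions $y\in\{n-5k+1,\dots,n\}$, and the union $T^{5k}_{j,j+2}(n)$ is a convex $3\times 5k$ rectangle inside $V(n)$. Hence distances in $T^{5k}(n)$ between vertices of this rectangle coincide with Manhattan distances in $H$, and the problem reduces to distance-$3$ packings of $G_{3,5k}$. The only fact I would use is the blocking description: a vertex $(c,y)\in S$ excludes every $(c',y')$ with $|c-c'|+|y-y'|\le 2$; in three adjacent columns this says that a vertex in one column forbids three consecutive rows in each adjacent column and the single matching row in the column two steps away. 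Using left--right reflection across the middle column, I would reduce to two configurations: (a) the two columns attaining value $k$ are adjacent, say $C_j$ and $C_{j+1}$; (b) they are the outer columns $C_j$ and $C_{j+2}$.

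For $k=1$ the argument is direct. In configuration (b), the two isolated vertices $(j,y_1)$ and $(j+2,y_3)$ must satisfy $y_1\ne y_3$, and the admissible rows in $C_{j+1}$ are the $y\in\{1,\dots,5\}$ with $|y-y_1|\ge 2$ and $|y-y_3|\ge 2$; I would enumerate the ordered pairs $(y_1,y_3)$, halved by vertical reflection, and check that this set always has at most two elements at mutual distance at most $2$, so packing leaves at most one vertex in $C_{j+1}$. Configuration (a) is handled identically, replacing the two constraints by $y\ne y_1$ and $|y-y_2|\ge 2$. For $k=2$ I would apply the same scheme on columns of length $10$. Of the $20$ possible size-$3$ placements in a column, only eight (four up to vertical reflection) admit a size-$2$ packing in an adjacent column; for each of these I would compute the admissible size-$2$ configurations in the remaining two columns and check them against the cross-constraint, which in the $(2,3,2)$ distribution reduces to the disjointness $\{a_1,a_2\}\cap\{b_1,b_2\}=\emptyset$ of the two outer columns' row sets. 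In every case the disjointness (or its analogue in the $(2,2,3)$ distribution, where the middle column has only two vertices and one outer has three) forces at least one of the supposedly ``full'' columns to contain at most one vertex, contradicting the hypothesis that it has exactly $k=2$ vertices.

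The main obstacle is purely bookkeeping for $k=2$: no new idea is required beyond the blocking rule and the two symmetry reductions, but the enumeration is roughly an order of magnitude larger than for $k=1$. Grouping the eight wide middle-column placements by reflection class and invoking the disjointness cross-constraint should keep the verification to a genuine routine exercise.
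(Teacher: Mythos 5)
Your argument is correct: the reduction to Manhattan-distance packings in a full $3\times 5k$ rectangle is valid (the three columns lie entirely inside $T^{5k}(n)$ since $j+2\le n-10\le n-5k+1$, and shortest paths between two vertices of the rectangle stay inside it), and your key counts check out --- exactly $8$ of the $20$ admissible row-triples in a height-$10$ column leave room for a distance-$3$ pair in an adjacent column, and in each of those cases the surviving $2$-packings pairwise intersect, so the disjointness forced between the two columns at horizontal distance $2$ (or, in the $(2,2,3)$ distribution, the single admissible residual row) yields the contradiction. The paper gives no proof at all, stating only that the proposition is ``a routine exercise,'' so your explicit case analysis is precisely the verification the authors are implicitly relying on.
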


\begin{lem}\label{lema12013}
Let $n\geq 27$ be an integer. Let $S$ be a maximum packing set of $T^{10}(n)$. Then $12\leq |S^{10}_{n-9,n}(n)|\leq 13$. 
\end{lem}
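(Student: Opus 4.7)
The plan is to exploit the partition $V(T^{10}(n)) = V(T^{10}_{1,n-10}(n)) \cup V(T^{10}_{n-9,n}(n))$, apply Observation \ref{obs1}, and then use three pieces of information already available in the paper: the size of a maximum packing set of $T^{10}(n)$ (Lemma \ref{maximofranja10}(ii)), Fisher's formula for grids (Equation (\ref{eq:Gpq})), and the value $\rho(T(10))=13$ from A085680.

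First I would dispose of the upper bound $|S^{10}_{n-9,n}(n)|\leq 13$. The key observation is that $T^{10}_{n-9,n}(n)$ is the subgraph of $T(n)$ induced by those $(x,y)$ with $n-9\leq x\leq y\leq n$, which (after translating coordinates by $n-10$) is isomorphic to $T(10)\simeq F_2(P_{11})$. Since $S^{10}_{n-9,n}(n)$ is a packing set of this subgraph, it has cardinality at most $\rho(T(10))=13$, the latter value coming from A085680$(11)$ in OEIS.

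Next I would derive the lower bound $|S^{10}_{n-9,n}(n)|\geq 12$ by a simple subtraction argument. Since $S$ is a \emph{maximum} packing set of $T^{10}(n)$ and $n\geq 27\geq 12$, Lemma \ref{maximofranja10}(ii) gives $|S|=2n-8$. On the other hand, $T^{10}_{1,n-10}(n)$ is an $(n-10)\times 10$ grid $G_{10,n-10}$; since $n\geq 27$ implies $n-10\geq 17$, the pair $(10,n-10)$ falls under the last case of Fisher's formula in Equation (\ref{eq:Gpq}), yielding
\[
\rho(G_{10,n-10})=\left\lceil\frac{10(n-10)}{5}\right\rceil=2(n-10).
\]
Because $S^{10}_{1,n-10}(n)$ is a packing set of this grid, $|S^{10}_{1,n-10}(n)|\leq 2(n-10)$. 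Combining with $|S|=|S^{10}_{1,n-10}(n)|+|S^{10}_{n-9,n}(n)|=2n-8$ gives $|S^{10}_{n-9,n}(n)|\geq (2n-8)-(2n-20)=12$.

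There is essentially no obstacle here beyond verifying that the hypotheses of the cited results are met for $n\geq 27$: namely, checking that $n-10\geq 17$ places us in the generic regime of Fisher's formula (avoiding the exceptional case $(8,10)$), and confirming the isomorphism $T^{10}_{n-9,n}(n)\simeq T(10)$. Both are immediate from the definitions in Section \ref{notacion}, so the proof reduces to the two-line bookkeeping above.
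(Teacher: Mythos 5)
Your proposal is correct and follows essentially the same route as the paper: both use the partition $V(T^{10}(n)) = V(T^{10}_{1,n-10}(n)) \cup V(T^{10}_{n-9,n}(n))$ together with $|S|=2n-8$ from Lemma \ref{maximofranja10}(ii), the bound $\rho(G_{10,n-10})=2(n-10)$ from Equation (\ref{eq:Gpq}), and $\rho(T(10))=13$ from A085680. The only cosmetic difference is that the paper phrases the lower bound as a contradiction from the assumption $|S^{10}_{n-9,n}(n)|\leq 11$, whereas you perform the subtraction directly.
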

\begin{proof} From Lemma \ref{maximofranja10} (ii) we know that $|S|=2n-8$.  Since $T^{10}_{n-9,n}(n)\simeq T(10)$ and  $\rho(T(10))=13$, by A085680 in \cite{oeis}, then $|S^{10}_{n-9,n}(n)|\leq 13$. Suppose now that $|S^{10}_{n-9,n}(n)|\leq 11$. As $|S|=|S^{10}_{1,n-10}(n)|+|S^{10}_{n-9,n}(n)|$, by Equation~(\ref{eq:Gpq}) we have $|S| \leq 2(n-10)+11<2n-8$, a contradiction. 
\end{proof}

\begin{prop}\label{proposicionunounofranjas}
Let $n \geq 27$ and $j\in \{10,\ldots ,n-12\}$. If $S$ is a maximum packing set of $T^{10}(n)$, 
then $|S^{10}_j(n)|=2$,  $|S^5_j(n)|=1$ and $|S^{n-9, n-5}_j(n)|=1$.
\end{prop}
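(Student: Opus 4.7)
The proposition claims that, for every interior column $j \in \{10,\ldots,n-12\}$, the maximum packing $S$ places exactly two vertices in column $j$, with one in the top block $T^5_j(n)$ and one in the bottom block $T^{n-9,n-5}_j(n)$. Since $|S^{10}_j(n)| = |S^5_j(n)| + |S^{n-9,n-5}_j(n)|$, my plan is to establish $|S^{10}_j(n)| = 2$ first and then exclude the asymmetric splits $(2,0)$ and $(0,2)$. Throughout, I use that $|S| = 2n-8$ by Lemma~\ref{maximofranja10}(ii). For the lower bound $|S^{10}_j(n)| \ge 2$, I partition $V(T^{10}(n))$ into the three vertex sets $V(T^{10}_{1,j-1}(n))$, $V(T^{10}_j(n))$, $V(T^{10}_{j+1,n}(n))$. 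The left piece is isomorphic to the full rectangular grid $G_{j-1,10}$ (since $9 \le j-1 \le n-13 < n-9$), and its packing number equals $2(j-1)$ by Equation~(\ref{eq:Gpq}); the right piece is isomorphic to $T^{10}(n-j)$ by translation, and its packing number equals $2(n-j)-8$ by Lemma~\ref{maximofranja10}(ii) (since $n-j \ge 12$). Observation~\ref{obs1} then gives $2n-8 = |S| \le 2(j-1) + |S^{10}_j(n)| + (2(n-j)-8)$, hence $|S^{10}_j(n)| \ge 2$.

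For the upper bound $|S^{10}_j(n)| \le 2$, I assume toward contradiction that $|S^{10}_{j^*}(n)| \ge 3$ for some $j^* \in \{10,\ldots,n-12\}$. Column $j^*$ is a path $P_{10}$, and a case analysis over the twenty arrangements of three pairwise-distance-$\ge 3$ positions in $P_{10}$ (together with the unique four-position arrangement $\{n-9,n-6,n-3,n\}$) shows that in every case the set of $y$-coordinates in column $j^*+1$ at distance $\ge 2$ from all vertices of $S^{10}_{j^*}(n)$ admits a packing of size at most two. Hence $|S^{10}_{j^*+1}(n)| \le 2$, and symmetrically $|S^{10}_{j^*-1}(n)| \le 2$. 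For $j^* \in \{11,\ldots,n-13\}$, the lower bound forces $|S^{10}_{j^*-1}(n)| = |S^{10}_{j^*+1}(n)| = 2$; Proposition~\ref{obsno32} applied with $k=2$ to the triple $\{j^*-1,j^*,j^*+1\}$ then gives $|S^{10}_{j^*}(n)| \le 2$, a contradiction. The boundary cases $j^* \in \{10, n-12\}$ are settled by iterating the same argument one column inward: at $j^* = 10$, forcing $|S^{10}_{11}(n)| = 2$ makes $|S^{10}_{12}(n)| \ne 2$ (otherwise Proposition~\ref{obsno32} on $\{10,11,12\}$ contradicts the assumption), so $|S^{10}_{12}(n)| \ge 3$; reapplying to $j^* = 12$ gives $|S^{10}_{13}(n)| = 2$, and Proposition~\ref{obsno32} on $\{11,12,13\}$ yields the desired contradiction. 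Here $n \ge 27$ is used to ensure $13 \in \{10,\ldots,n-12\}$. The case $j^* = n-12$ is symmetric.

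To finish, I must show $|S^5_j(n)| = |S^{n-9,n-5}_j(n)| = 1$, i.e., rule out the splits $(2,0)$ and $(0,2)$; the two cases are symmetric, so I focus on $(2,0)$. Assuming both $S$-vertices of column $j$ lie in the top block, the three admissible pairs are $\{(j,n-4),(j,n-1)\}$, $\{(j,n-4),(j,n)\}$, and $\{(j,n-3),(j,n)\}$. For the first and third, the packing constraint restricts the admissible $y$-coordinates in column $j+1$ to the bottom block, and together with $|S^{10}_{j+1}(n)| = 2$ from Step 2 this pins column $j+1$ to the unique bottom pair $\{(j+1,n-9),(j+1,n-6)\}$, yielding a $(0,2)$ split; iterating forces the alternating pattern $(2,0),(0,2),(2,0),\ldots$ across the entire range, with every $(0,2)$-column pinned to $\{(x,n-9),(x,n-6)\}$. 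This pattern cannot be continued to the right end of $T^{10}(n)$: the positions $(x,n-9)$ and $(x,n-6)$ cease to be vertices of $T^{10}(n)$ once $x$ exceeds $n-9$ and $n-6$ respectively, producing a forced deficit that conflicts with the count $|S|=2n-8$ and with Lemma~\ref{lema12013}. The middle pair $\{(j,n-4),(j,n)\}$ leaves five candidate pairs in column $j+1$: the pair $\{(j+1,n-9),(j+1,n-6)\}$ re-enters the alternating situation just ruled out, while the remaining four have the form $\{(j+1,n-2),(j+1,y_b)\}$ with $y_b \in \{n-9,n-8,n-7,n-6\}$ and must each be pushed one more column, where a short additional case analysis shows that either the lower bound from Step 1 is violated at column $j+2$ or the alternating pattern is re-triggered.

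The hardest step is the last one. The middle configuration $\{(j,n-4),(j,n)\}$ leaves nontrivial residual freedom in column $j+1$, so the argument must branch over the five admissible pairs there and follow each branch at least one further column; moreover, the final contradiction ultimately comes from the incompatibility of the pinned bottom positions with the staircase of $T^{10}(n)$ at its right end, and tracking this requires careful bookkeeping against the global count $|S|=2n-8$ and against the right-block structure provided by Lemma~\ref{lema12013}.
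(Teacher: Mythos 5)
Your first two steps are essentially sound. The lower bound $|S^{10}_j(n)|\ge 2$ via the partition into $T^{10}_{1,j-1}(n)$, the column $j$, and $T^{10}_{j+1,n}(n)$ is the same counting the paper packages as Corollary~\ref{lemaparticion} and its boundary variants, and your route to the upper bound (three vertices in a column leave room for at most two in each adjacent column, then Proposition~\ref{obsno32} applied to the two neighbours forced to equal $2$) is a legitimate alternative to the paper's configuration analysis, provided the twenty-case check you assert is actually carried out. The genuine problem is your Step~3, the $1$--$1$ split between $T^5_j(n)$ and $T^{n-9,n-5}_j(n)$; this is precisely the point where the paper abandons hand arguments and verifies by computer that all $54$ packings of $G_{10,5}$ with two vertices in every column split the middle column one-and-one.

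Concretely, three things go wrong in Step~3. First, the uniqueness claims are false: writing $z=y-(n-10)\in\{1,\dots,10\}$, the pair $\{(j,n-3),(j,n)\}$ (i.e.\ $z\in\{7,10\}$) forbids only $z\ge 6$ in column $j+1$ and leaves \emph{three} admissible bottom pairs, $\{1,4\},\{1,5\},\{2,5\}$, not the single pair $\{(j+1,n-9),(j+1,n-6)\}$; so the pattern does not propagate deterministically. Second, your terminal contradiction --- pushing an alternating pattern to the right end of $T^{10}(n)$ and colliding with the staircase and Lemma~\ref{lema12013} --- is out of reach: the fact that every column carries exactly two packing vertices is only available for columns $10,\dots,n-12$, so the propagation loses control before it enters the region where the collision is supposed to happen, and for $j$ near $n-12$ even the first step invokes $|S^{10}_{j+1}(n)|=2$ for a column outside that range. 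Third, you miss the contradiction that is actually available and is entirely local: for the pair $z\in\{6,9\}$ \emph{both} neighbouring columns are forced to the unique pair $\{1,4\}$, whence $(j-1,n-9)$ and $(j+1,n-9)$ lie at distance $2$, an immediate packing violation; the remaining branches die similarly within three or four columns (for instance, after $z\in\{6,10\}$ in column $j$ and $\{8,2\}$ in column $j+1$, the admissible set in column $j+2$ is $\{4,5\}$, contradicting $|S^{10}_{j+2}(n)|\ge 2$). A correct hand proof of the $1$--$1$ split along your general lines does exist, but it is this local multi-branch analysis, not the global right-end argument you describe; as written, Step~3 does not close.
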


\begin{proof} Let $j,n$ and $S$ be as in the statement of the proposition. Then $|S|=2n-8$ by Lemma \ref{maximofranja10} (ii). Also note that $|S^{5}_j(n)|\in \{0,1,2\}$ and that $|S^{10}_j(n)|\in \{0,1,2,3,4\}$.
We proceed by showing each of the  conclusions of the proposition separately. 
\vskip 0.2cm
{\bf Claim A} $|S^{10}_j(n)|=2$.
\vskip 0.2cm
{\em Proof of Claim A.} We divide the  proof of this claim depending on the values of $j$.  
\vskip 0.2cm
{\bf Case 1} $j\in \{12,13,\ldots ,n-14\}$. In each of the next subcases, we derive a contradiction from the assumption that $|S^{10}_j(n)|\not=2$.
\begin{enumerate}

\item [{\bf Subcase 1.1}] $|S^{10}_j(n)|\leq 1$. Then Corollary \ref{lemaparticion} (with $i=0$) implies 
$|S|<2n-8=\rho(T^{10}(n))$, a contradiction.
\item [{\bf Subcase 1.2}] $|S^{10}_j(n)|=3$. We analyze three subcases separately. 

\begin{itemize}
\item [{\bf Subcase 1.2.1}] Neither $(j,n)$ nor $(j,n-9)$ belong to $S$. In this case, it is easy to see that $|S^{10}_{j-1}(n)\cup S^{10}_{j+1}(n)|\leq 1$. Therefore, $|S^{10}_{j-1,j+1}(n)|\leq 4$. By Corollary \ref{lemaparticion}, with $i=2$, we have $|S|<2n-8,$ a contradiction.

\item [{\bf Subcase 1.2.2}]  Exactly one of  $(j,n)$ or $(j,n-9)$ belongs to $S$. Again, it is not hard to see that the current assumptions on $S$ imply $|S^{10}_{j-1}(n)\cup S^{10}_{j+1}(n)|\leq 2$, and hence  $|S^{10}_{j-1,j+1}(n)|\leq 5$. By Corollary \ref{lemaparticion}, with $i=2$, we have $|S|<2n-8$, a contradiction.

\item [{\bf Subcase 1.2.3}] Both vertices  $(j,n)$ and $(j,n-9)$ are in $S$. Similarly, the current assumptions on $S$ imply $|S^{10}_{j-1}(n)\cup S^{10}_{j+1}(n)|\leq 3$. If $|S^{10}_{j-1}(n)\cup S^{10}_{j+1}(n)|\leq 2$, then $|S^{10}_{j-1,j+1}(n)|\leq 5$ and we proceed as in Subcase 1.2.2. Then we suppose that 
$|S^{10}_{j-1}(n)\cup S^{10}_{j+1}(n)|=3$. From $(j,n), (j,n-9)\in S^{10}_j(n)$ it follows that $S^{10}_{j-1}(n)\subset T^{n-7,n-2}_{j-1}(n)$ and  that $S^{10}_{j+1}(n)\subset T^{n-7,n-2}_{j+1}(n)$. Since $T^{n-7,n-2}_{j-1}(n)\simeq T^{n-7,n-2}_{j+1}(n) \simeq P_6$ and $\rho(P_6)=2$, then we have that 
 exactly one of $|S^{10}_{j-1}(n)|$ or $|S^{10}_{j+1}(n)|$ is equal to $2$. We only analyze the case $|S^{10}_{j-1}(n)|=2$, because the other case is totally analogous. If $|S^{10}_{j-1}(n)|=2$, then  $|S^{10}_{j+1}(n)|=1$ and we only have four possible configurations for $S^{10}_{j-1,j+1}(n),$ see Figure \ref{unicas2:tiles} ($a$). We note that any of these four configurations implies $|S^{10}_{j-2}(n)|\leq 1$. The last inequality and Corollary \ref{lemaparticion} (with $i=0$) imply $|S|<2n-8$, a contradiction.

\end{itemize}

\item [{\bf Subcase 1.3}] $|S^{10}_j(n)|=4$. Then $S^{10}_j(n)$ must be as shown in Figure~\ref{unicas2:tiles} ($b$). This implies $|S^{10}_{j-1}(n)\cup S^{10}_{j+1}(n))|=0$, and so 
$|S^{10}_{j-1,j+1}(n)|=4$.  By Corollary \ref{lemaparticion}, with $i=2$, we have $|S|<2n-8$, a contradiction.
\end{enumerate}

\begin{figure}[H]
\begin{center}
\includegraphics[width=.55\textwidth]{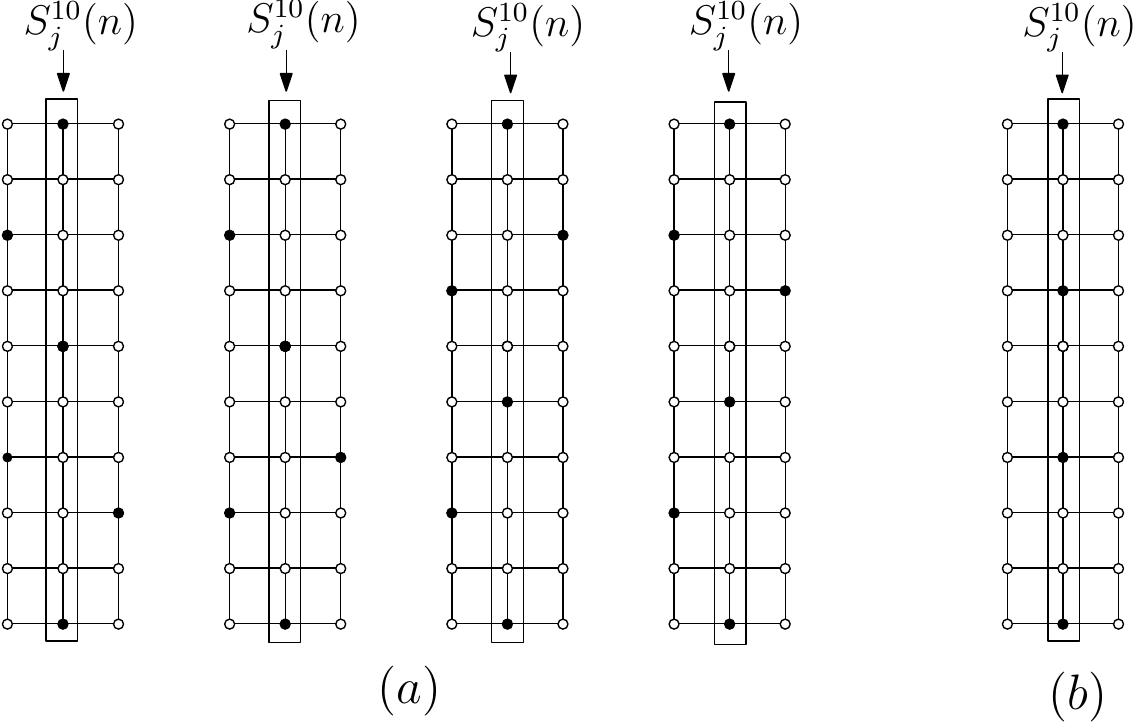}
\caption{\small  (a) These four are the only configurations satisfying $|S^{10}_{j-1}(n)|=2, |S^{10}_j(n)|=3$ and $|S^{10}_{j+1}(n)|=1$. (b) This is the only one configuration 
	satisfying $|S^{10}_j(n)|=4$.}
\label{unicas2:tiles}
\end{center}
\end{figure}

{\bf Case 2}  $j\in \{10,11, n-13,n-12\}$. We need to show that $|S^{10}_j(n)|=2$.

\begin{enumerate}
\item [{\bf Subcase 2.1}]  $j=11$. By Case 1 we know that $|S^{10}_{12}(n)|=|S^{10}_{13}(n)|=2$. These equalities and Proposition~\ref{obsno32} imply 
 $|S^{10}_{11}(n)|\leq 2$. On the other hand, from Lemma~\ref{maximofranja10} (ii) and Equation (\ref{eq:Gpq}) we known that $|S^{10}_{12,n}(n)|\leq 2(n-11)-8$ and 
 $|S^{10}_{1,10}(n)|\leq 20$, respectively. If $|S^{10}_{11}(n)|< 2$, then we have 
\[
|S|= |S^{10}_{1,10}(n)|+|S^{10}_{11}(n)|+|S^{10}_{12,n}(n)|< 20+2+2(n-11)-8=2n-8,
\] a contradiction. Then $|S^{10}_{11}(n)|= 2$.

\item [{\bf Subcase 2.2}] $j=10$. By Case 1 and Subcase 2.1 we know that $|S^{10}_{11}(n)|=|S^{10}_{12}(n)|=2$. These equalities and 
Proposition~\ref{obsno32} imply $|S^{10}_{10}(n)|\leq 2$. If $|S^{10}_{10}(n)|< 2$, then from Lemma~\ref{maximofranja10} (ii) and Equation (\ref{eq:Gpq}) it follows that
\[
|S|= |S^{10}_{1,9}(n)|+|S^{10}_{10}(n)|+|S^{10}_{11,n}(n)| < 18+2+2(n-10)-8=2n-8,
\]  a contradiction.  Then $|S^{10}_{10}(n)|= 2$.

\item [{\bf Subcase 2.3}] $j=n-13$. By Case 1 we know that $|S^{10}_{n-15}(n)|=|S^{10}_{n-14}(n)|=2$. These equalities and Proposition~\ref{obsno32} imply 
 $|S^{10}_{n-13}(n)|\leq 2$. If $|S^{10}_{n-13}(n)|< 2$, then from Lemma~\ref{maximofranja10} (ii) and Equation (\ref{eq:Gpq}) it follows that 
\[
|S|= |S^{10}_{1,n-14}(n)|+|S^{10}_{n-13}(n)|+|S^{10}_{n-12,n}(n)|< 2(n-14)+2+2(13)-8=2n-8,
\] a contradiction. Then $|S^{10}_{n-13}(n)|=2$.

\item [{\bf Subcase 2.4}] $j=n-12$. By Case 1 and Subcase 2.3 we know that $|S^{10}_{n-14}(n)|=|S^{10}_{n-13}(n)|=2$.  These equalities and Proposition~\ref{obsno32} 
imply $|S^{10}_{n-12}(n)|\leq 2$.  If $|S^{10}_{n-12}(n)|< 2$, then from Lemma~\ref{maximofranja10} (ii) and Equation (\ref{eq:Gpq}) it follows that
\[
|S|= |S^{10}_{1,n-13}(n)|+|S^{10}_{n-12}(n)|+|S^{10}_{n-11, n}(n)| < 2(n-13)+2+2(12)-8=2n-8,
\] a contradiction. Then $|S^{10}_{n-12}(n)|=2$. $\triangle$
\end{enumerate} 

\vskip 0.2cm
{\bf Claim B} $|S^5_j(n)|=1$ and $|S^{n-9, n-5}_j(n)|=1$.
\vskip 0.2cm
{\em Proof of Claim B.}  We analyze three cases separately depending on the value of $j$.

\begin{itemize}
\item [{\bf Case 1}]  $j\in \{12,\ldots ,n-14\}$. Let $G_{10,5}$  be the $10\times 5$ grid graph with vertex set $\{\{1,\ldots ,10\}\times \{1,\ldots ,5\}\}$. 
The statement for this case was verified as follows: we find by computer all the packing sets of $G_{10,5}$ which have exactly two packing vertices in each of its $5$ columns, and then we verify that in all these sets (namely 54), the $y$-coordinate of exactly one of the two packing vertices in the middle column of $G_{10,5}$ is in $\{1,\ldots ,5\}$ and the $y$-coordinate of the other one is in $\{6,\ldots ,10\}$. This implies the desired result for $j \in \{12, \ldots, n-14\}$ because from Claim A we know that $S$ has exactly two packing vertices in each column of $T^{10}_{10,n-12}(n)$.

\item [{\bf Case 2}]  $j\in \{11, n-13\}$. If $j=11$ let $r:=1$, and if $j=n-13$ let $r:=-1$.  From Case 1 we know that $|S^5_{j+r}(n)|=|S^5_{j+2r}(n)|=1$ and 
$|S^{n-9, n-5}_{j+r}(n)|=|S^{n-9, n-5}_{j+2r}(n)|=1$. These and Proposition \ref{obsno32} imply  $|S^5_j(n)|\leq 1$ and $|S^{n-9, n-5}_j(n)|\leq 1$, respectively. 
The last inequalities and Claim A imply $|S^5_j(n)|=|S^{n-9, n-5}_j(n)|=1$, as desired. 

\item [{\bf Case 3}]  $j\in \{10,n-12\}$.  Similarly, if $j=10$ let $r:=1$, and if $j=n-12$ let $r:=-1$. From  Case 1 and Case 2 we know that $|S^5_{j+r}(n)|=|S^5_{j+2r}(n)|=1$ and
 $|S^{n-9, n-5}_{j+r}(n)|=|S^{n-9, n-5}_{j+2r}(n)|=1$. These and Proposition \ref{obsno32} imply $|S^5_j(n)|\leq 1$ and $|S^{n-9, n-5}_{j}(n)|\leq 1$, respectively. The last inequalities and Claim A imply $|S^5_j(n)|=1$ and $|S^{n-9, n-5}_{j}(n)|=1$, as desired. $\triangle$
\end{itemize} 
\end{proof}

%%%%%%%%%%%%%%%%%%%%%%%%%%%%%%%%%%%%%%%%%%%%%%%%%%%%%%%%%%%%%%%%%
\begin{lem}\label{claim1}
Let $n\geq 27$ be an integer and let $S$ be a packing set of $T^{10}(n)$ such that $|S|=2n-8$ and $|S^5(n)|=n-1$. Then 

\begin{itemize}
\item[(i)] $|S^5_{1,9}(n)|=9$. 
\item[(ii)] $|S^5_i(n)|=1$, for $i\in\{10, \dots, n-4\}$. 
\item[(iii)] $|S^5_{1, i}(n)|=i$, for  $i\in \{9,\ldots ,n-4\}$.
\end{itemize}
\end{lem}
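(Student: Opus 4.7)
The plan is to prove (i), (ii), (iii) in that order, with (iii) an immediate consequence of (i) and (ii).

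For (i), the lower bound $|S^5_{1,9}(n)|\ge 9$ follows from Corollary \ref{U5menormax} applied to the maximum packing $S^5(n)$ of $T^5(n)$. For the upper bound I argue by contradiction, so suppose $|S^5_{1,9}(n)|=10$. From Proposition \ref{proposicionunounofranjas} we have $|S^5_{10}(n)|=|S^5_{11}(n)|=1$. I then carry out a descending induction on $j\in\{9,8,\dots,2\}$ that maintains the joint invariant $|S^5_{1,j}(n)|=j+1$ and $|S^5_{j+1}(n)|=|S^5_{j+2}(n)|=1$. At step $j$, Proposition \ref{obsno32} applied to the consecutive columns $j,j+1,j+2$ (whose last two values are $1$ by the invariant or by Proposition \ref{proposicionunounofranjas}) yields the upper bound $|S^5_j(n)|\le 1$, while Equation (\ref{eq:Gpq}) gives $\rho(G_{5,j-1})=j$, so $|S^5_{1,j-1}(n)|\le j$ and hence $|S^5_j(n)|\ge (j+1)-j=1$. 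Therefore $|S^5_j(n)|=1$ and $|S^5_{1,j-1}(n)|=j=\rho(G_{5,j-1})$, and the invariant is inherited for $j-1$. Iterating all the way to $j=2$ leaves $|S^5_1(n)|=|S^5_{1,1}(n)|=2$. Then applying Proposition \ref{obsno32} with $j=1$ to the just-established equalities $|S^5_2(n)|=|S^5_3(n)|=1$ yields $|S^5_1(n)|\le 1$, the desired contradiction. Hence $|S^5_{1,9}(n)|=9$.

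For (ii), from (i) and the identity $|S^5_{1,9}(n)|+|S^5_{10,n-12}(n)|+|S^5_{n-11,n}(n)|=n-1$ combined with $|S^5_{10,n-12}(n)|=n-21$ from Proposition \ref{proposicionunounofranjas}, we obtain $|S^5_{n-11,n}(n)|=11=\rho(T^5(12))$. Thus $S^5_{n-11,n}(n)$ is a maximum packing set of $T^5_{n-11,n}(n)\cong T^5(12)$, and Corollary \ref{U5menormax} applied inside this isomorphic copy gives $|S^5_{n-11,j}(n)|\ge j-(n-12)$ for every $j\in\{n-11,\dots,n-4\}$. Iterating Proposition \ref{obsno32} outward from the already-known equalities $|S^5_{n-13}(n)|=|S^5_{n-12}(n)|=1$ yields the matching upper bounds $|S^5_j(n)|\le 1$, and telescoping the two inequalities forces $|S^5_j(n)|=1$ for every $j\in\{n-11,\dots,n-4\}$. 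Combined with Proposition \ref{proposicionunounofranjas}, this gives (ii).

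For (iii), by (i) and (ii), for every $i\in\{9,\dots,n-4\}$,
\[
|S^5_{1,i}(n)|=|S^5_{1,9}(n)|+\sum_{j=10}^{i}|S^5_j(n)|=9+(i-9)=i.
\]
The main obstacle is tracking the saturation in the descending induction of (i): at each step the combination of Proposition \ref{obsno32} (upper bound via adjacency) and Equation (\ref{eq:Gpq}) (lower bound via $\rho(G_{5,k})=k+1$) must force $|S^5_j(n)|=1$ exactly, so that the chain of saturations propagates all the way down to the column-$1$ contradiction; the rest of the argument is routine bookkeeping with the earlier results.
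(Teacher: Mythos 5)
Your proposal is correct and follows essentially the same route as the paper: the lower bound on $|S^5_{1,9}(n)|$ via Corollary \ref{U5menormax}, the descending column-by-column saturation argument combining Proposition \ref{obsno32} with $\rho(T^5_{1,i}(n))=i+1$ to refute $|S^5_{1,9}(n)|=10$, and the outward propagation of $|S^5_j(n)|=1$ using Proposition \ref{proposicionunounofranjas} and \ref{obsno32} for (ii). The only cosmetic differences are that you localize the contradiction in (i) at column $1$ rather than at the total count $\sum_{j=1}^{9}|S^5_j(n)|=9\neq 10$, and in (ii) you obtain the lower bounds $|S^5_j(n)|\geq 1$ by applying Corollary \ref{U5menormax} to the maximum packing of the copy $T^5_{n-11,n}(n)\simeq T^5(12)$ instead of the paper's direct global count, which amounts to the same estimate.
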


\begin{proof} We show each of these three conclusions separately.
\vskip 0.3cm
First we show (i). From Equation (\ref{eq:Gpq}) we know that $\rho(T^5_{1,i}(n))=i+1$, for any $i\in \{1,2,\ldots ,n-4\}$. Then $|S^5_{1,9}(n) | \leq 10$. If $|S^5_{1,9}(n)| \leq 8$, then Corollary \ref{U5menormax} implies $|S^5(n)| < n-1$, a contradiction. Hence $9 \leq |S^5_{1,9}(n)|\leq 10$. We will derive a contradiction by assuming that $|S^5_{1,9}(n)|=10$. 

From Proposition \ref{proposicionunounofranjas} we know that $|S^5_{10}(n)|=|S^5_{11}(n)|=1$. These equalities and Proposition~\ref{obsno32} imply
  $|S^5_9(n)|\leq 1$. On the other hand, since $10=|S^5_{1,9}(n)|=|S^5_{1,8}(n)|+|S^5_9(n)|$ and $|S^5_{1,8}(n)|\leq 9$ by Equation (\ref{eq:Gpq}), then
  we must have  $|S^5_{1,8}(n)|=9$ and $|S^5_9(n)|=1$. Note that by a repeated application of the reasoning in this paragraph we can conclude that $|S^5_{j}(n)|=1$ for $j=8,7,\dots, 1$, and so $\sum _{j=1}^{9}|S^5_j(n)|=9$. Since $|S^5_{1,9}(n)|=\sum _{j=1}^{9}|S^5_j(n)|$, then $10=9$, a  contradiction.  
  
 \vskip 0.3cm 
 
Now we show (ii).  In view of Proposition \ref{proposicionunounofranjas}, we only need to show that $|S^5_i(n)|=1$ for every $i \in \{n-11,n-10, \dots, n-4\}$.    
From Proposition \ref{proposicionunounofranjas} we know that $|S^5_{10,n-12}(n)|=\sum_{j=10}^{n-12}|S^5_j(n)|= n-21$. This equality and Lemma \ref{claim1} (i) imply that
\[
|S^5_{1,n-12}(n)|=|S^5_{1,9}(n)|+|S^5_{10, n-12}(n)| = 9 + (n - 21) = n-12.
\] 
Since by Proposition \ref{proposicionunounofranjas} $|S^5_{n-13}(n)|=|S^5_{n-12}(n)|=1$, then  $|S^5_{n-11}(n)|\leq 1$ by Proposition~\ref{obsno32}. 
If $|S^5_{n-11}(n)|=0$, then Lemma~\ref{claim1} (i) and Lemma~\ref{maximofranja10} (i) imply
$|S^5(n)|=|S^5_{1,n-12}(n)|+|S^5_{n-11}(n)|+|S^5_{n-10, n}(n)| \leq (n-12)+0+10=n-2,$ 
which contradicts the hypothesis that $|S^5(n)|=n-1$. Therefore $|S^5_{n-11}(n)|=1$. So we have $|S^5_{n-12}(n)|=|S^5_{n-11}(n)|=1$, and by Proposition~\ref{obsno32} we have that $|S^5_{n-10}(n)|\leq 1.$ As before, by a repeated application of the reasoning in this paragraph we can deduce that  
$|S^5_j(n)|=1$ for $j=n-10,n-9, \dots, n-4$, as required.

Finally, we note that the assertion in (iii) follows directly from (i) and (ii). 
\end{proof}

%%%%%%%%%%%%%%%%%%%%%%%%%%%%%%%%%%%%%%%%%%%%%%%%
%%%%%%%%%%%%%%%%%%%%%%%%%%%%%%%%%%%%%%%%%%%%%%%%

\subsection{Proof of Lemma \ref{lemma:upper}}

We proceed by induction on $n$. Since $a(n)=\rho(T(n))=A085680(n+1)$ for every $n\in \{1,\ldots ,49\}$, we can assume that $n \geq 50$ and that the assertion holds for every $k < n$.
 By the recursion given in Observation~\ref{formula:rec}, it is enough to show that $\rho(T(n)) \leq a(n-5)+n-2$.
 
 On the other hand, we recall the following isomorphism relations between elements of $\{T(i)\}_{i\in \mathbb Z^+}$ and the given subgraphs of $T(n)$: $T(n-5)\simeq T^{1,n-5}_{1,n-5}(n)$, $T(n-10) \simeq T^{1,n-10}_{1,n-10}(n)$, $T(10)\simeq T^{10}_{n-9,n}(n)$ and $T(5)\simeq T^{5}_{n-4,n}(n)$. Sometimes throughout this proof, for brevity of notation, for $i<n$ we will use the $T(i)$ to denote its corresponding isomorphic $T's$ subgraphs.
 
Let $S$ be a maximum packing set of $T(n)$. Thus $\rho(T(n))=|S|$. We recall that $S^5(n)=T^5(n)\cap S$ and that $S^{10}(n)=T^{10}(n)\cap S$. 
If $|T(n-10) \cap S|\leq a(n-10)-1$ or $|S^{10}(n)| \leq (2n-8)-1$, then Observation~\ref{formula:rec} implies the required inequality:
\[
|S| = |T(n-10)\cap S| + |S^{10}(n)| \leq a(n-10)+(2n-8)-1 = a(n-5)+n-2.
\]
Similarly, if $|T(n-5) \cap S |\leq a(n-5)-1$ or $|S^5(n)|\leq (n-1)-1$ we obtain 
\[
|S|\leq a(n-5) + (n-1) -1 = a(n-5)+n-2.
\] 
Thus we can assume that $|T(n-5) \cap S |\geq a(n-5)$, $|T(n-10) \cap S |\geq a(n-10)$, $|S^5(n)|\geq n-1$ and $|S^{10}(n)|\geq 2n-8$. 
Moreover, since $S^5(n)$ and $S^{10}(n)$ are packing sets of 
$T^5(n)$ and $T^{10}(n)$ respectively, then from Lemma \ref{maximofranja10} we must have that

\begin{equation}\label{franjas}
|S^5(n)|=n-1 \text{ and }  
|S^{10}(n)|=2n-8.
\end{equation} 

In other words, $S^5(n)$ and 
$S^{10}(n)$ must be maximum packing sets of $T^5(n)$ and $T^{10}(n)$, respectively. Similarly, since $T(n-5)\cap S$ and
$T(n-10) \cap S$ are packing sets of  $T(n-5)$ and $T(n-10)$ respectively, then from the induction hypothesis we have that 

\begin{equation}\label{induction}
|T(n-5) \cap S | = a(n-5) \text{ and }  
|T(n-10) \cap S | = a(n-10).
\end{equation}

From now on, we derive several contradictions from Equations (\ref{franjas}) and  (\ref{induction}). 

From $|S^{10}(n)|=2n-8$ and Lemma~\ref{lema12013} it follows that $12\leq |S^{10}_{n-9,n}(n)|\leq 13$. Next we show that $|S^{10}_{n-9,n}(n)|\not=13$. 

\vskip 0.2cm
{\bf Claim 1} $|S^{10}_{n-9,n}(n)|\not=13$. 
\vskip 0.2cm
{\em Proof of Claim 1.} Seeking a contradiction, suppose that  $|S^{10}_{n-9,n}(n)|=13$. As we mentioned in the proof of Lemma \ref{maximofranja10}, 
the packing sets of $T^{10}_{n-9,n}(n)$ shown in Figure \ref{fig:unicosT10}
are the only sets with exactly 13 vertices.

Moreover, note that if $S^{10}_{n-9,n}(n)$ is any of (a), (b) or (c) in Figure \ref{fig:unicosT10}, then $|S^5_{n-10}(n)|=0$, which
 contradicts Lemma \ref{claim1} (ii). Hence we can assume that $S^{10}_{n-9,n}(n)$ is the packing set shown in Figure \ref{fig:unicosT10} (d). 
 
 Now consider the packing set $Q$ of $T^{n-9,n}_{n-13,n-5}(n)$ 
formed by the black, red and blue vertices in Figure \ref{casosT10con13:tiles}. Note that $S^{10}_{n-9,n-5}(n)$ is precisely the subset of 
$Q$ formed by the black and red vertices. 

%%%%%%%%
\begin{figure}[H]
\begin{center}
\includegraphics[width=.85\textwidth]{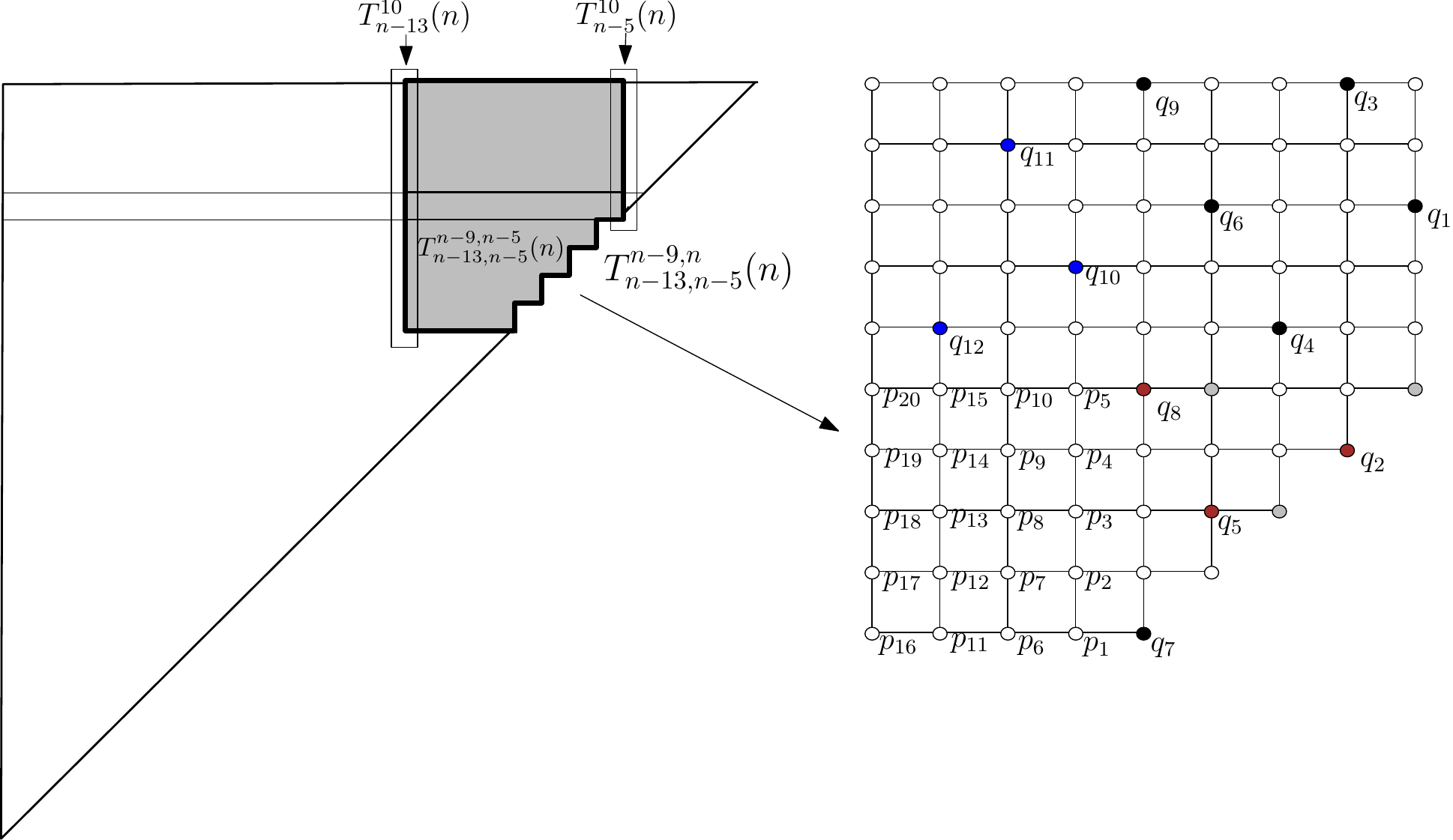}
\caption{\small This is $T^{n-9, n}_{n-13, n-5}(n)$. The set of black and brown vertices is $S^{10}_{n-9,n-5}(n)$.}
\label{casosT10con13:tiles}
\end{center}
\end{figure}

We label the points of $Q\cup T^{n-9,n-5}_{n-13,n-10}(n)$ with $p_1, p_2,\ldots ,p_{20}, q_1, q_2,\ldots ,q_{12}$ according to Figure \ref{casosT10con13:tiles}. Notice that $Q=\{q_1, q_2,\ldots , q_{12}\}$.

From Lemma \ref{claim1} (ii), we know that $|S^5_{n-12}(n)|=|S^5_{n-11}(n)|=|S^5_{n-10}(n)|=1$. These equalities and the locations of $q_6, q_8$ and $q_9$ imply that the only
 available location for the vertex in $S^5_{n-10}(n)$ is precisely the location of $q_{10}$, and so 
$S^5_{n-10}(n)=\{q_{10}\}$. Similarly, we can deduce that  $S^5_{n-11}(n)=\{q_{11}\}$ and $S^5_{n-12}(n)=\{q_{12}\}$.  
Since $q_5,q_7,q_8$ and $q_{12}$ are elements of $S$, then it is easy to verify that 
\begin{equation}\label{eq:pi}
\{p_1,p_2,p_3,p_4,p_5, p_{10},p_{14},p_{15}, p_{20}\}\cap S=\emptyset.          
\end{equation}

We finish the proof of claim by constructing a packing set $R$ of $T(n-5)$ with cardinality $|T(n-5)\cap S|+1$. Note that the existence of such an $R$ contradicts 
the assumption that $\rho(T(n-5))=|T(n-5)\cap S|$.  

First consider the set $R':= (S\setminus S^5(n))\cup \{p_{10}\}$ and let 
$N:=\{u\in S\setminus S^5(n) : d(u,p_{10})\leq 2\}$. Note that if $N=\emptyset$, then $R'$ is the required $R$. 

Now we show that if $N\not=\emptyset$, then we can always slightly modify $R'$ in order to get the required packing set $R$. Indeed, if 
neither $p_8$ nor $p_9$ belongs to $N$, then Equation (\ref{eq:pi}) implies that $N=\{q_8\}$.  In this case we simply interchange $q_2, q_5$ and $q_8$ by the three gray points 
in Figure \ref{casosT10con13:tiles} and the resulting set is the required $R$. 

On the other hand, suppose that at least one of $p_8$ or $p_9$ belongs to $N$. Because $S$ is a packing set, then exactly one of $p_8$ or $p_9$ belongs to $N$. Let $p_r$ be such an vertex. Since $p_r\in S$, then neither $p_7$ nor $p_{13}$ belongs to $R'$. From these facts and Equation (\ref{eq:pi}) it follows that  $N=\{p_r,q_8\}$. As before,  we interchange $p_r, q_2, q_5$ and $q_8$ by $p_3$ and the three gray points in Figure \ref{casosT10con13:tiles}. Clearly, the resulting set is the required $R$.  
$\triangle$

In view of Claim 1 and the fact $12\leq |S^{10}_{n-9,n}(n)|\leq 13$, we have that $|S^{10}_{n-9,n}(n)|=12$. 

 As $T^5_{n-4,n}(n) \simeq T(5)$ and $\rho (T(5))=4$,  then $|S^5_{n-4,n}(n)|\leq 4$.  If $|S^5_{n-4,n}(n)|<4$ then from Lemma \ref{claim1} (iii) it follows that
\[
|S^5(n)|=|S^5_{1,n-5}(n)|+|S^5_{n-4,n}(n)|<n-5+4=n-1,
\] which contradicts $|S^5(n)|=n-1$. Therefore we can assume that $|S^5_{n-4,n}(n)|=4$. From Lemma \ref{claim1} (ii) we know that $|S^5_{n-9,n-5}(n)|=5$ and hence 
\[
|S^{n-9,n-5}_{n-9,n-5}(n)|=|S^{10}_{n-9,n}(n)|-|S^5_{n-4,n}(n)|-|S^5_{n-9,n-5}(n)|=12- 4 - 5 = 3.
\] 
Next we show that $|S^{10}_{n-10}(n)|=2$ by contradiction. We have the following cases: 

\begin{itemize}
\item [1)]  $|S^{10}_{n-10}(n)|\leq 1$. From the fact that  $T^{10}_{1,n-11}(n)\simeq G_{10,n-11}$ and Equation~(\ref{eq:Gpq}) it follows that
$|S^{10}_{1,n-11}(n)|\leq \rho(G_{10,n-11})=2(n-11)$. This and $|S^{10}_{n-9,n}(n)|=12$ imply 
\[
|S^{10}(n)|=|S^{10}_{1,n-11}(n)|+|S^{10}_{n-10}(n)|+|S^{10}_{n-9,n}(n)|\leq 2(n-11)+1+12 = 2n-9, 
\]  a contradiction. 

\item [2)] $|S^{10}_{n-10}(n)|=3$. We have verified exhaustively by computer that there does not exist a packing set of $T^{n-9, n}_{n-14, n}(n)$ satisfying the current properties of 
$S$. We recall that such properties are the following:
 
\begin{itemize}
\item[(P1)] $|S^{10}_{n-9,n}(n)|=12$,
\item[(P2)]   $S^5_i(n)=1$ for every $i\in \{n-14,n-13,\ldots, n-5\}$, (by Lemma \ref{claim1} (ii))
\item[(P3)]  $|S^5_{n-4, n}(n)|=4$,
\item[(P4)]  $|S^{n-9, n-5}_{n-9, n-5}(n)|=3$, 
\item[(P5)]  $|S^{n-9,n-5}_{n-10}(n)|=2$, (because $|S^{10}_{n-10}(n)|=3$ and $|S^5_{n-10}|=1$ by Lemma \ref{claim1} (ii)),
\item[(P6)]  $|S^{n-9, n-5}_{i}(n)|=1$ for  every $i\in \{n-14,n-13, n-12\}$ (by Proposition \ref{proposicionunounofranjas}).
\end{itemize}

We remark that (P2), (P3), (P4) and (P6) are consequences of (P1) and hold independently on whether or not $|S^{10}_{n-10}(n)|=3$.  

\item [3)] $|S^{10}_{n-10}(n)|=4$. From Lemma \ref{claim1} (ii) we know that $|S^5_{n-10}(n)|=1$, and so  $|S^{n-9,n-5}_{n-10}(n)|=3$, which is impossible. 
\end{itemize}
%%%%%%%%
\begin{figure}[H]
\begin{center}
\includegraphics[width=.89\textwidth]{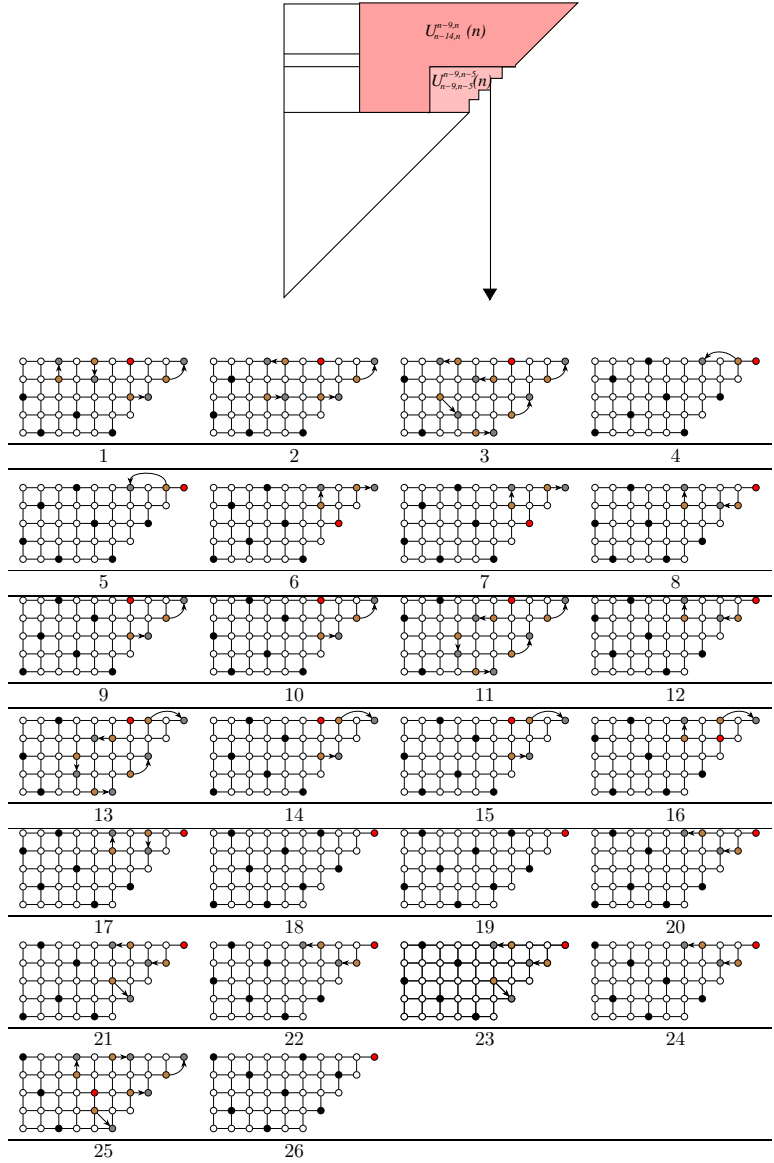}
\caption{\small In each of these 26 cases, the union of the black and brown vertices is a packing set of $T_{n-14,n-5}^{n-9,n-5}(n)$ which satisfies (P1), (P2), (P3), (P4), (P6) and additionally $|S^{n-9,n-5}_{n-11}(n)|=1$ and $|S^{n-9,n-5}_{n-10}(n)|=1$. Moreover, these 26 packing sets are the only satisfying all these conditions. Finally, note that in each case,  the set $R$ that results when we interchange the brown vertices by the red and yellow vertices in $S\setminus S^5(n)$ is a packing set of $T(n-5)$ with cardinality 
$|S\setminus S^5(n)|+1$.}
\label{fig:muestra}
\end{center}
\end{figure}

Therefore we have that $|S^{10}_{n-10}(n)|=2$. This and Lemma \ref{claim1} (ii) imply $|S^{n-9,n-5}_{n-10}(n)|=1$. Thus $|S^{10}_{n-10,n}(n)|=14$.
 Now we will prove that $|S^{n-9,n-5}_{n-11}(n)|=1$. By Proposition ~\ref{proposicionunounofranjas} we know that $|S^{n-9,n-5}_{n-13}(n)|=|S^{n-9,n-5}_{n-12}(n)|=1$. These equalities and Proposition~\ref{obsno32} imply $|S^{n-9,n-5}_{n-11}(n)|\leq 1$.  
 If $|S^{n-9,n-5}_{n-11}(n)|=0$, then Lemma \ref{claim1} (ii) imply that $|S^{10}_{n-11}(n)|=|S^5_{n-11}(n)|+|S^{n-9,n-5}_{n-11}(n)|=1$. From this and Equation (1) we have that 
\[
 |S^{10}(n)|=|S^{10}_{1,n-12}(n)|+|S^{10}_{n-11}(n)|+|S^{10}_{n-10,n}(n)|=2(n-12)+1+14=2n-9,
\]
which contradicts $|S^{10}(n)|=2n-8$. Hence we have that $|S^{n-9,n-5}_{n-11}(n)|=1$.

We have obtained exhaustively by computer all the possibilities for $S^{10}_{n-14,n}(n)$ which satisfy (P1), (P2), (P3), (P4), (P6) and additionally $|S^{n-9,n-5}_{n-11}(n)|=1$ and
$|S^{n-9,n-5}_{n-10}(n)|=1$. These  possibilities restricted to $S^{n-9,n-5}_{n-14,n-5}(n)$ are exactly $26$ and they are shown in Figure~\ref{fig:muestra}. 

The points of $S^{n-9,n-5}_{n-14,n-5}(n)$ are coloured black and brown in Figure \ref{fig:muestra}. For each of these 26 possibilities, we proceed similarly as in the proof of 
Claim 1 in order to exhibit a packing set $R$ of $T(n-5) $ with cardinality $|T(n-5)\cap S|+1$. Again, this contradicts $\rho(T(n-5))=|T(n-5)\cap S|$ and will allow us to conclude that $|S^{10}_{n-9,n}(n)|\not=12$, and hence the proof of Lemma  \ref{lemma:upper}.
%%%%%%%%%%%%
%%%%%%%%%%%% 
Now we show that for each of these 26 possibilities, such a packing set $R$ exists. Indeed, in each of these cases take $R$ as the union of $S\setminus S^5(n)$ with the red, black and yellow vertices. In each case it is easy to verify that such an set $R$ is the required packing set of $T(n-5)$.
$\square$

The following corollary prove the conjecture of Rob Pratt about  sequence A085680. 
\begin{cor} 
$\sum_{n\geq 0} A085680(n+2)x^n=\frac{1-x+x^2-x^{10}+x^{11}}{(1-x)^2(1-x^5)}$
\end{cor}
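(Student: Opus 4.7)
The plan is to read off Pratt's generating function directly from Theorem \ref{thm:main}. I set $b_n := A085680(n+2)$ and $B(x) := \sum_{n \geq 0} b_n x^n$, so the corollary asks for a closed form of $B(x)$. Theorem \ref{thm:main} gives $b_n = a(n+1)$ for all $n \geq 5$, and Observation \ref{formula:rec} then yields the linear recurrence $b_n = b_{n-5} + (n-1)$ valid for every $n \geq 10$.

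Next, I exploit this recurrence by computing $(1-x^5) B(x)$ and splitting by the range of $n$:
\[
(1-x^5) B(x) = \sum_{n=0}^{4} b_n x^n + \sum_{n=5}^{9} (b_n - b_{n-5}) x^n + \sum_{n \geq 10} (n-1) x^n.
\]
The first piece is $1 + x + 2x^2 + 3x^3 + 4x^4$, read off directly from Table \ref{tablaan}, and the third piece is a standard rational series. The middle piece must be handled by hand: the values in Table \ref{tablaan} show that on the transitional block $5 \leq n \leq 9$ one has $b_n - b_{n-5} = n$ rather than $n-1$, because for those $n$ the index $n-5 \in \{0,\ldots,4\}$ lies outside the regime where Theorem \ref{thm:main} applies. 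One checks in five lines that each such difference is exactly one more than the recurrence would predict.

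After combining the three pieces using the elementary identities $\sum_{n \geq 5} n x^n = \frac{x}{(1-x)^2} - x - 2x^2 - 3x^3 - 4x^4$ and $\sum_{n=5}^{9} x^n = \frac{x^5 - x^{10}}{1-x}$, massive cancellation collapses everything to
\[
(1-x^5) B(x) = 1 + \frac{x}{(1-x)^2} - \frac{x^{10}}{1-x}.
\]
A single clearing of denominators gives the numerator $(1-x)^2 + x - x^{10}(1-x) = 1 - x + x^2 - x^{10} + x^{11}$ over $(1-x)^2$, and dividing through by $(1-x^5)$ yields exactly Pratt's formula. The only substantive obstacle is the off-by-one bookkeeping on the transitional block $\{5,\ldots,9\}$: this correction is precisely what generates the $-x^{10}$ term in the final numerator, so it is not just a nuisance but the essential ingredient that distinguishes the true generating function from the naive guess obtained by pretending the recurrence holds all the way down to $n=5$.
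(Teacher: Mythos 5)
Your proposal is correct and follows the same route as the paper: the paper's proof simply invokes the recurrence $A085680(n)=A085680(n-5)+(n-3)$ for $n\geq 12$ (equivalent to your $b_n=b_{n-5}+(n-1)$ for $n\geq 10$) together with the initial values from Table~\ref{tablaan} and leaves the rest to ``standard techniques.'' You have carried out those standard techniques explicitly and correctly, including the off-by-one correction on the transitional block $5\leq n\leq 9$ that produces the $-x^{10}+x^{11}$ terms.
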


\begin{proof}
As $a(n)=A085680(n+1)$ for $n\geq 6$, we can use the recursion $a(n)=a(n-5)+n-2$ to obtain $A085680(n)=A085680(n-5)+(n-3)$ for $n\geq 12$, with initial values in Table~\ref{tablaan}, and then, the desired generating function can be deduced by standard techniques. 
\end{proof}
  
\section*{Acknowledgements}

The authors would like to thank Neil Sloane for useful email exchanges and for his  explanation about the graph theory version of the error correcting code problem counted by sequence A085680. 

% ******************** B I B L I O G R A P H Y *********************

\begin{bibdiv}
\begin{biblist}

\bibitem{alavi1}  Y. Alavi, M. Behzad, and J. E. Simpson. Planarity of double vertex graphs. In Y. Alavi et al., Graph theory, Combinatorics, Algorithms, and Applications (San Francisco, CA, 1989), pp. 472--485. SIAM, Philadelphia, 1991.

\bibitem{alavi2} Y. Alavi, M. Behzad, P. Erd\"os, and D. R. Lick. Double vertex graphs. {\it J. Combin. Inform. System Sci.}, {\bf 16}(1) (1991), 37--50.

\bibitem{alavi3} Y. Alavi, D. R. Lick and J. Liu. Survey of double vertex graphs, {\it Graphs Combin.}, {\bf 18}(4) (2002), 709--715.

%\bibitem{alavi} S. H. Alavi, A generalisation of Johnson graphs with an application to triple
  % factorisations, {\it Discrete Math.}, {\bf 338}(11) (2015),  2026--2036.

\bibitem{token3} H. de Alba, W. Carballosa, J. Lea\~nos and L. M. Rivera,  Independence and matching numbers of some token graphs, {\it arXiv.org:1606.06370v2} (2016).

\bibitem{alzaga} A. Alzaga, R. Iglesias, and R. Pignol, Spectra of symmetric powers of graphs and the Weisfeiler-Lehman refinements, {\it J. Combin. Theory Ser. B} {\bf 100} (2010), no. 6, 671--682.

\bibitem{aude} K. Audenaert, C. Godsil, G. Royle, T. Rudolph, Symmetric squares of graphs, {\it J. Combin. Theory Ser. B,} {\bf 97} (2007), 74--90.

\bibitem{buten} 
Butenko, S., Pardalos, P.M., Sergienko, I.V., Shylo, V., Stetsyuk, P.: Estimating
the size of correcting codes using extremal graph problems. In: C. Pearce
(ed.) Optimization: Structure and Applications. Kluwer Academic Publishers,
Dordrecht, The Netherlands (2003)

\bib{caline}{article}{
   author={C{\u{a}}linescu, Gruia},
   author={Dumitrescu, Adrian},
   author={Pach, J{\'a}nos},
   title={Reconfigurations in graphs and grids},
   journal={SIAM J. Discrete Math.},
   volume={22},
   date={2008},
   number={1},
   pages={124--138},
}

%%%%%%%%%%%%%%%%%%%%%%%%%%%%%%%

\bibitem{token2} W. Carballosa, R. Fabila-Monroy, J. Lea\~nos and L. M. Rivera, Regularity and planarity of token graphs, {\it Discuss. Math. Graph Theory}, {\bf 37}(3) (2017), 573--586. 

\bib{etzion}{article}{
   author={Etzion, Tuvi},
   author={Bitan, Sara},
   title={On the chromatic number, colorings, and codes of the Johnson
   graph},
   journal={Discrete Appl. Math.},
   volume={70},
   date={1996},
   number={2},
   pages={163--175},
}

\bib{FFHH}{article}{
   author={Fabila-Monroy, Ruy},
   author={Flores-Pe{\~n}aloza, David},
   author={Huemer, Clemens},
   author={Hurtado, Ferran},
   author={Urrutia, Jorge},
   author={Wood, David R.},
   title={Token graphs},
   journal={Graphs Combin.},
   volume={28},
   date={2012},
   number={3},
   pages={365--380},
}	

\bib{fisher}{article}{
   author={Fisher, D. C.},
   title={The 2-packing number of complete grid graphs},
   journal={Ars Combinatoria},
   volume={30},
   date={1993},
  number={1},
   pages={261--270},
}	

\bibitem{galli} J. A. Gallian, The Mathematics of Identification Numbers. {\it The College
Mathematics Journal} {\bf 22} (1991), 194--202.

\bib{demaine}{article}{
   author={Ito, Takehiro},
   author={Demaine, Erik D.},
   author={Harvey, Nicholas J. A.},
   author={Papadimitriou, Christos H.},
   author={Sideri, Martha},
   author={Uehara, Ryuhei},
   author={Uno, Yushi},
   title={On the complexity of reconfiguration problems},
   journal={Theoret. Comput. Sci.},
   volume={412},
   date={2011},
   number={12-14},
   pages={1054--1065},
}

\bibitem{Jo} S. M. Johnson, A new upper bound for error-correcting codes, {\it IRE Trans. Inform. Theory} {\bf 8}(3) (1962), 203--207.

\bibitem{leatrujillo} J. Lea\~nos and A. L. Trujillo-Negrete, The connectivity of token graphs, {\it submitted}.

\bib{barghi}{article}{
   author={Rahnamai Barghi, Amir},
   author={Ponomarenko, Ilya},
   title={Non-isomorphic graphs with cospectral symmetric powers},
   journal={Electron. J. Combin.},
   volume={16},
   date={2009},
   number={1},
   pages={Research Paper 120, 14},
  
}

%\bib{ratner}{article}{
  % author={D. Ratner},
 %  author={M. Warmuth},
 %  title={The $(n^2-1)$-puzzle and related relocation problems},
 %  journal={Journal of Symbolic Computation},
 %  volume={10},
 %  date={1990},
  % number={2},
 %  pages={111--137},
%}

%\bib{riyono}{article}{
 %  author={H. Riyono},
 %  title={Hamiltonicity of the Graph $G(n,k)$ of the Johnson scheme},
 %  journal={Jurnal Informatika},
 %  volume={3},
 %  date={2007},
 %  number={1},
 %  pages={41--47},
%}

\bibitem{rudolph} T. Rudolph, {\em Constructing physically intuitive graph invariants}, arXiv:quant-ph/0206068 (2002).

\bibitem{rive-tru} L. M. Rivera and A. L. Trujillo-Negrete, Hamiltonicity of token graphs of fan graphs, {\it Art Discr. Appl. Math.} to appear. 
\bib{terwilliger}{article}{
   author={Terwilliger, Paul},
   title={The Johnson graph $J(d,r)$ is unique if $(d,r)\not=(2,8)$},
   journal={Discrete Math.},
   volume={58},
   date={1986},
   number={2},
   pages={175--189},
}

\bibitem{oeis}
N. J. A. Sloane,
\newblock The On-Line Encyclopedia of Integer Sequences, http://oeis.org.

\bibitem{sloane1} 
N. J. A. Sloane. On single-deletion-correcting codes. In K. T. Arasu and
A. Seress, editors, Codes and Designs, volume 10 of Ohio State University
Mathematical Research Institute Publications, pages 273--291. Walter de
Gruyter, Berlin, 2002.

\bibitem{sloane2} N. J. A. Sloane, Challenge Problems: Independent Sets in Graphs.\\
http://www.research.att.com/?njas/doc/graphs.html, 2000.
Accessed July 2017.

\bibitem{sloanep} N. J. A. Sloane, {\it personal communication}.
%%%%%%%%%%%%%%%%%%%%%%%%%%%%%%

\bib{vanden}{article}{
   author={van den Heuvel, Jan},
   title={The complexity of change},
   conference={
      title={Surveys in combinatorics 2013},
   },
   book={
      series={London Math. Soc. Lecture Note Ser.},
      volume={409},
      publisher={Cambridge Univ. Press, Cambridge},
   },
   date={2013},
   pages={127--160},
}

\bib{yama}{article}{
   author={Yamanaka, Katsuhisa},
   author={Demaine, Erik D.},
   author={Ito, Takehiro},
   author={Kawahara, Jun},
   author={Kiyomi, Masashi},
   author={Okamoto, Yoshio},
   author={Saitoh, Toshiki},
   author={Suzuki, Akira},
   author={Uchizawa, Kei},
   author={Uno, Takeaki},
   title={Swapping labeled tokens on graphs},
   journal={Theoret. Comput. Sci.},
   volume={586},
   date={2015},
   pages={81--94},
}

\bibitem{zhu} B. Zhu, J. Liu, D. R. Lick, Y. Alavi, n-Tuple vertex graphs. {\it Congr. Numerantium} {\bf 89} (1992), 97--106.

\end{biblist}
UNIDAD ACADEMICA DE MATEMATICAS\\
 UNIVERSIDAD AUTONOMA DE ZACATECAS, ZAC., MEXICO\\
(JMGS) jmgomez@uaz.edu.mx\\
(JL) jesus.leanos@gmail.com\\
(LMR-C) lriosfrh@gmail.com\\
(LMR) luismanuel.rivera@gmail.com\\

\end{bibdiv}

\end{document}